\theoremstyle{plain}
\newtheorem{thm}{Theorem}
\newtheorem{lemma}[thm]{Lemma}
\newtheorem{proposition}[thm]{Proposition}
\theoremstyle{remark}
\newtheorem{remark}{Remark}
\theoremstyle{definition}
\theoremstyle{remark}
\newcommand{\E}{\mathbb{E}}
\newcommand{\p}{\mathbb{P}}
\newcommand{\R}{\mathbb{R}}
\newcommand{\I}{\mathbb{I}}
\newcommand{\F}{\mathcal{F}}
\newcommand{\V}{\mathcal{V}}
\newcommand{\A}{\mathcal{A}}
\newcommand{\s}{\sigma}
\title{Mirror and Synchronous Couplings of Geometric Brownian Motions}
\date{}
\author{Saul D.\ Jacka}
\address{Department of Statistics, University of Warwick, UK}
\email{s.d.jacka@warwick.ac.uk}
\author{Aleksandar Mijatovi\'c}
\address{Department of Mathematics, Imperial College London, UK}
\email{a.mijatovic@imperial.ac.uk}
\author{Dejan Širaj}
\address{Department of Statistics, University of Warwick, UK}
\email{d.siraj@warwick.ac.uk}
\begin{document}

\thanks{D. Širaj acknowledges the support of
the Slovene Human Resources Development and Scholarship Fund.}

\keywords{Mirror and synchronous coupling,
coupling time, geometric Brownian
motions, efficient coupling, optimal coupling, Bellman's principle.}

\subjclass[2010]{60J60, 93E20}

\begin{abstract}
The paper studies the question of whether the classical mirror
and synchronous couplings of two Brownian
motions minimise and maximise, respectively, the coupling time
of the corresponding geometric Brownian motions. 
We establish a characterisation of the optimality of the 
two couplings over any finite time horizon and show that,
unlike in the case of Brownian motion, the optimality fails in general 
even if the geometric Brownian motions are martingales.
On the other hand, we prove that in the cases of 
the ergodic average and the infinite time horizon criteria,
the mirror coupling and the synchronous coupling are
always optimal for general (possibly non-martingale) geometric 
Brownian motions. We show that the two couplings  
are efficient if and only if they are optimal over
a finite time horizon and give a conjectural answer 
for the efficient couplings when they are suboptimal.
\end{abstract}

\maketitle

\section{Introduction}

Let the process $B = (B_t)_{t \geq 0}$ be a fixed standard Brownian motion and
consider a standard Brownian motion
$V = (V_t)_{t \geq 0}$ 
on the same probability space. For any starting points $x, y \in \R$, define 
the \textit{coupling time}
$\tau(V)$
to be the first time the processes $x + B$ and $y + V$ meet.
It is obvious that the 
\textit{synchronous coupling}
$V=B$
maximises the coupling time 
as it makes it infinite almost surely
(assuming $x\neq y$). 
Note further that the coupling time 
$\tau(V)$
for any Brownian motion 
$V$
cannot be smaller than the first time
one of the processes 
$x + B$ 
and
$y + V$ 
reaches level
$(x+y)/2$.
In the case of the \textit{mirror coupling}
$V=-B$,
this random time actually equals 
$\tau(V)$ 
and the 
coupling inequality becomes an equality.
%Hence the mirror coupling 
%minimises the coupling time of 
%$x + B$ 
%and
%$y + V$
%almost surely.
In particular, for any fixed $T \geq 0$, the extremal Brownian motion
in the optimisation problem,
\begin{equation}
\label{eq:fin_hor_BMs}
\text{minimise (resp. maximise)} \quad \p(\tau(V) > T) \quad \text{over all Brownian motions $V$,}
\end{equation}
is given by the mirror (resp. synchronous) coupling, uniformly
over all finite time horizons. 
%Furthermore, the same holds in the infinite time horizon 
%problem, i.e. when
%$\p(\tau(V) > T)$
%is replaced by
%$\int_0^\infty \mathrm{e}^{-qt} \,\p(\tau(V) > t)\mathrm{d}t$
%for any ``discount'' rate
%$q>0$.

It is natural to investigate the following closely related problem
for geometric Brownian motion: 
minimise the coupling time 
of the processes 
$\mathrm{d} X_t =\s_1 X_t \,\mathrm{d} B_t$
and
$\mathrm{d} Y_t(V) =\s_2 Y_t(V) \,\mathrm{d} V_t$
over all Brownian motions
$V$
on a given filtered probability space.
The aim here is to maximise the probability 
%(i.e. the probability 
of the event that
$X$
and
$Y(V)$
couple before a given fixed time
$T$.
Since the 
processes
%geometric Brownian motions
$X$
and
$Y(V)$
are, at any time 
$t$,
given by explicit deterministic functions of 
$B_t$
and
$V_t$
respectively,
the discussion above might suggest that mirror
coupling of 
$B$
and
$V$
should be optimal.
Furthermore, 
since 
$X$
and
$Y(V)$
are martingales, 
the Dambis-Dubins-Schwarz representation of the
difference
$X-Y(V)$
intuitively suggests that the two processes will meet as
early as possible if
%it is not implausible to expect that this will be achieved by
the coupling of the Brownian motions 
$B$
and
$V$
is chosen so that the instantaneous volatility of
$X - Y(V)$
is as large as possible.
Equivalently put,
the minimal coupling time should be achieved by 
the Brownian motion 
$V$
which maximises
(at every moment in time) the instantaneous
quadratic variation
$ \mathrm{d}[X-Y(V)]_s
%= \mathrm{d} [X]_t + \mathrm{d}[Y(V)]_t - 2\mathrm{d}[X,Y(V)]_t \\ &
= \left((\s_1 X_s)^2 +
(\s_2 Y_s(V))^2 \right) \mathrm{d}s - 2 \s_1 X_s \s_2 Y_s(V)\,\mathrm{d}[B,V]_s.$
Since the (random) Lebesgue density of the
covariation
measure
$\mathrm{d}[B,V]_s$
on
$[0,\infty)$
is always between
$-1$
and
$1$,
it follows that 
%this also suggests that 
%the optimal coupling should in this case be
the mirror coupling
$V = -B$
should be optimal. However, as we shall see, both of these intuitive 
arguments turn out to be false in general.

This paper investigates the problems of
minimising and maximising the coupling time
of two general (i.e. possibly non-martingale) geometric
Brownian motions (GBMs) using a finite time, 
infinite time and ergodic average criteria. 
In the finite time horizon case we study 
the analogue of 
Problem~\eqref{eq:fin_hor_BMs} for GBMs
and give a necessary and sufficient condition on the 
value function 
%In the former case
%we characterise, under certain assumptions on the value function,
for the mirror (resp. synchronous)
coupling to be optimal. 
%in 
%the analogue of 
%Problem~\eqref{eq:fin_hor_BMs} for GBM.
%geometric Brownian motion. 
This leads to an if-and-only-if condition on the parameters of the GBMs,
which characterises the suboptimality (and hence optimality) of
the mirror (resp. synchronous) coupling %to be suboptimal 
for any finite time horizon. 
In contrast to the intuitive arguments given above, this condition implies 
that mirror (resp. synchronous) coupling can be suboptimal
in Problem~\eqref{eq:fin_hor_BMs} for GBMs
even if the geometric Brownian motions are martingales. 
This raises a natural question: is the exponential tail of 
the mirror (resp. synchronous) coupling optimal or, put differently, is the 
coupling efficient in the sense of~\cite{article4}? We show that the mirror (resp. synchronous) 
coupling is efficient if and only if it is optimal, and hence
may be inefficient. In the case where
the coupling is suboptimal, the proof of the aforementioned equivalence 
suggests the conjecture that the synchronous 
(resp. mirror) coupling is efficient in the minimisation (resp.
maximisation) problem.
%We also give a natural sufficient condition for the parameters
%of the GBMs, under which the two couplings are optimal
%over any finite time horizon.

The \textit{stationary} and \textit{infinite time horizon} (for some ``discount'' rate
$q>0$) problems are given as the 
analogues of Problem~\eqref{eq:fin_hor_BMs}
with 
$\p(\tau(V) > T)$
replaced by
$$
\limsup_{T \to \infty} \frac{1}{T} \int_0^T \p(\tau(V) > t) \,\mathrm{d}t
\qquad\text{ and }\qquad
\int_0^\infty \mathrm{e}^{-qt} \,\p(\tau(V) > t)\,\mathrm{d}t,$$
respectively.
It is clear that in the case of Brownian motion,
the mirror (resp. synchronous) coupling 
is optimal according to both of these criteria. 
%the ergodic average criterion and 
%in the infinite time horizon case for any positive rate
%$q$. 
In this paper we prove that, unlike in the finite 
time horizon case, the same holds for all (possibly non-martingale)
geometric Brownian motions. 
In particular this implies that the mirror coupling, which may be 
inefficient (i.e. has a thicker exponential tail than the optimal coupling),
nevertheless minimises both the Laplace transform of the tail 
probability for any ``discount'' rate
$q$ and its ergodic average. 
Our proofs are based on Bellman's principle.

An application in mathematical finance of the coupling problems considered in the
present paper can be described as follows. Assume that the performance of a  
portfolio manager is assessed at some fixed future time (e.g. one year from now)
with respect to a benchmark security (e.g. some equity index), which 
evolves as a geometric Brownian motion $X$.
Put differently, the remuneration of the manager depends on whether her 
portfolio, which evolves as $Y(V)$, exceeds the benchmark $X$ in normalised terms. 
Assume also that the manager's mandate stipulates
that, over the same  time horizon, her portfolio may not exceed a pre-specified  
amount of realised variance. Both of these assumptions are realistic and are
used extensively in practice, since the investor wants to beat the index but 
cannot tolerate arbitrary amounts of volatility in the mean time (e.g. investors 
like pension funds routinely stipulate such realised variance conditions). 
Imagine now a situation where the manager has a given amount of time, say 
$T$, before the evaluation of her performance, but is behind 
the benchmark by a certain amount. The question of how to trade in such a way
(a)~to minimise the probability of not catching up with the benchmark 
before 
$T$ 
and (b)~to achieve this without taking unnecessary bets which would increase 
the realised volatility of the portfolio, is precisely the question of 
the stochastic minimisation of the coupling time between 
$X$
and
$Y(V)$
(recall that the expected quadratic variation of
$Y(V)$, i.e. the realised variance of the manager's
portfolio, does not depend on the choice of Brownian motion
$V$).

%in the infinite time horizon
%setting, analogous to the one above,
%in~\eqref{eq: inf hor for BMs},
%the mirror (resp.\ synchronous)
%coupling is minimal
%(resp.\ maximal).

%we show in the last section that both conjectures
%fail in the finite time horizon case. On the other hand, in the case of
%geometric Brownian motions, the infinite horizon problems are solved by $-B$ and
%$B$ respectively, i.e.\ the optimal couplings are the mirror coupling (also
%called coupling by reflection) for the minimisation problem and the synchronous
%coupling for the maximisation problem. That is proved in the third section,
%using crucially the Bellman's principle. The case when $q = 0$ is different and
%is also investigated.

The mirror coupling and the synchronous coupling of Brownian motions and related
processes have attracted much attention in the literature.
%been important topics for a long time.
For example the classical book~\cite{Lindvall}
and paper~\cite{article6} 
introduce the mirror couplings of Brownian motions %is proved to be a maximal coupling, 
and diffusion processes 
(see also book~\cite{Thorisson} for the general theory of coupling).
In~\cite{article5} it is established that the mirror coupling is not the only maximal
coupling, although it is the unique maximal coupling in the family of Markovian
(also known as immersed) couplings. In \cite{article} it is proved that the tracking error of
two driftless diffusions is minimised by the synchronous coupling
of the driving Brownian motions. In \cite{article2} generalised
mirror coupling and generalised synchronous coupling of Brownian motions are introduced;
the former minimises the
coupling time and maximises the tracking error of two regime-switching
martingales, whereas the latter does the opposite. Articles \cite{article3},
\cite{article4}, and \cite{article7} discuss various applications of the mirror
%(and~\cite{article4} also the synchronous) 
coupling of reflected Brownian motions and other processes.
In particular in~\cite{article4}, the notion of 
efficiency of a 
Markovian coupling, also used in the present paper, is studied in the context of 
the spectral gap of the generator of a Markov process.

The remainder of the paper is organised as follows. Section~\ref{sec:Setting}
describes the setting and basic notation, which is used throughout. 
Section~\ref{sec:Inf_Hor} establishes the optimality of the mirror and
synchronous couplings in the infinite time horizon (Section~\ref{subsec:Inf_Time}, Theorem~\ref{thm}) 
and stationary (Section~\ref{subsec:Stationary}, Proposition~\ref{pthm}) problems.
In Section~\ref{sed:Fin_Hor}
we characterise the optimality of the mirror and synchronous 
couplings over a finite time horizon
(Section~\ref{subsec:Fin_Hor}, Theorem~\ref{neg})
and analyse the efficiency of the two couplings (Section~\ref{subsec:Efficiency}, Theorem~\ref{prop:Efficiency}).
%apply this characterisation to prove that the two
%couplings may be suboptimal 
Appendix~\ref{app} contains a well-known lemma from stochastic analysis,
which enables us to apply Bellman's principle.

\section{Setting and notation}
\label{sec:Setting}

Let $(\Omega, \F, (\F_t)_{t \geq 0}, \p)$ be a filtered probability space which
is rich enough to support a standard $(\F_t)$-Brownian motion
$B = (B_t)_{t \geq
0}$. Let
\begin{equation}
\label{eq:DEf_cV}
\V := \{ V=(V_t)_{t \geq 0};\; V \text{ is an } (\F_t)\text{-Brownian
motion with } V_0 = 0 \}
\end{equation}
be the set of all (standard) $(\F_t)$-Brownian
motions on this probability space.

Let $X = (X_t)_{t \geq 0}$ and $Y(V) = (Y_t(V))_{t \geq 0}$ be geometric
Brownian motions, satisfying stochastic differential equations
\begin{equation}
\label{gbmx} X_t = x + \int_0^t X_s \left(\s_1 \,\mathrm{d}B_s +
a_1 \,\mathrm{d}s\right)\quad \text{and}\quad
Y_t(V) = y + \int_0^t Y_s(V)
\left(\s_2 \,\mathrm{d}V_s + a_2 \,\mathrm{d}s\right).
\end{equation}
The Brownian motion
$B$
is fixed throughout and
$V$
is any element of the set
$\V$,
defined in~\eqref{eq:DEf_cV}.
We assume throughout the paper that 
\begin{equation}
\label{parameters} x, y > 0, \quad a_1, a_2 \in \R \quad \text{and} \quad \s_1,
\s_2 \in \R, \quad \text{such that} \quad \s_1 \s_2 > 0,
\end{equation}
and define the following constants 
\begin{equation}
\label{def:mu_sigma_pm}
\mu := a_2 - a_1 + \s_1^2/2 - \s_2^2/2\quad\text{ and }\quad\sigma_\pm:=\s_2\pm\s_1.
\end{equation}
Note that~\eqref{parameters} implies 
$|\sigma_+|>|\sigma_-|$.
The symbol 
$\pm$ 
denotes 
either $+$ or $-$. 
If 
$\pm$ and $\mp$ 
appear in the same expression, then they
simultaneously denote either $+$ and $-$, or $-$ and $+$.
%The value of 
%$\mu$
%will determin the optimality of the mirror and
%synchronous couplings over finite time horizons
%(see Corollary~\ref{neg}).
%.
%\end{equation}
%hold.

%Note that, unlike~\cite{article}, we allow linear drifts but
%assume that the volatility functions are linear.

Define the \emph{coupling time}
of the two processes in~\eqref{gbmx} as
$$\tau(V) := \inf \{t \geq 0;\; X_t = Y_t(V)\} \qquad (\inf\emptyset := \infty).$$
%For every $V \in \V$ the random variable $\tau(V)$ is a
%stopping time with respect to the filtration $(\F_t)_{t \geq 0}$, and it 
The random variable $\tau(V)$ 
is zero when the two processes start at the same point and
positive $\p$-a.s. otherwise.
Under mild assumptions
(e.g. if the filtration $(\F_t)_{t \geq 0}$ is right-continuous or
if all the paths of Brownian motions
$V$  and $B$ are continuous), 
$\tau(V)$ is 
$\p$-a.s. equal to 
an $(\F_t)$-stopping time. 
Furthermore, since 
$X$
and
$Y(V)$
are geometric Brownian motions, this coupling time 
can be expressed as the coupling time of a Brownian motion and a Brownian motion with drift:
$\tau(V)=\inf \{t \geq 0;\; V_t = (\sigma_1 B_t-\mu t +\log(x/y))/\sigma_2\}$.

\section{Stationary and infinite time horizon problems}
\label{sec:Inf_Hor}

\subsection{Infinite time horizon problems}
\label{subsec:Inf_Time}
For any $q> 0$, we consider the following two problems: find
$V^{\text{inf}} \in \V$ and $V^{\text{sup}} \in \V$ (if they exist) such that
\begin{equation}
\label{min} \tag{qInf} \inf_{V \in \V} \int_0^\infty \mathrm{e}^{-qt} \,
\p \big( \tau(V) > t \big) \,\mathrm{d}t = \int_0^\infty \mathrm{e}^{-qt} \,
\p \big( \tau \big( V^{\text{inf}} \big) > t \big) \,\mathrm{d}t
\end{equation}
and
\begin{equation}
\label{max} \tag{qSup} \sup_{V \in \V} \int_0^\infty \mathrm{e}^{-qt} \,
\p \big( \tau(V) > t \big) \,\mathrm{d}t = \int_0^\infty \mathrm{e}^{-qt} \,
\p \big( \tau(V^{\text{sup}}) > t \big) \,\mathrm{d}t.
\end{equation}

A simple integration by parts
yields
$\int_0^\infty \mathrm{e}^{-rt} \,\p(\tau > t) \,\mathrm{d}t =
(1 - \E( \mathrm{e}^{-r\tau} ))/r$  for
any nonnegative random variable $\tau$ and $r>0$.
Therefore Problems~\eqref{min} and~\eqref{max}
are equivalent to finding $V^{(+)} \in \V$
and
$V^{(-)} \in \V$ respectively,
such that
\begin{equation}
\label{pm} \tag{q$\pm$}
\sup_{V \in \V} \pm\E \left( \mathrm{e}^{-q\tau(V)} \right)
=\pm \E \left( \mathrm{e}^{-q\tau\left( V^{(\pm)} \right)} \right).
\end{equation}
%where
%$\pm$
%denotes
%either
%$+$
%or
%$-$.
Note also that if $e_q$ is an exponential random variable with
$\E(e_q)=1/q$,
%law $\text{Exp}(q)$ and
independent of the filtration $(\F_t)_{t \geq 0}$,
then Problems~\eqref{min} and~\eqref{max} are
equivalent to minimising and maximising $\p(\tau(V) > e_q)$ over $V \in \V$,
respectively.
%so we could be talking about the exponential time horizon instead
%of infinite time horizon.

%In the case $q = 0$ Problems \eqref{min} and \eqref{max} become \begin{equation}
%\label{q0} \tag{Sup$_0$/Inf$_0$} \text{maximise/minimise} \quad \E(\tau(V))
%\quad \text{over} \quad V \in \V \end{equation} respectively.
The following theorem holds.
%states that the mirror and synchronous coupling of the driving
%Brownian motions are optimal in Problems~(q$+$) and~(q$-$) respectively. 

\begin{thm} \label{thm}
A solution to Problem~\eqref{pm} is (for any
$q > 0$)
given by
$$V^{(\pm)} = \mp B.$$
\end{thm}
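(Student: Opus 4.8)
The plan is to reduce the problem to a one-dimensional optimal stopping/stochastic control problem and apply Bellman's principle. As noted in Section~\ref{sec:Setting}, the coupling time $\tau(V)$ is the first hitting time of the process $Z_t := \log(X_t/Y_t(V))$ to $0$. Using the SDEs in~\eqref{gbmx} and It\^o's formula, $Z_t = \log(x/y) + \int_0^t \big(\sigma_1\,\mathrm{d}B_s - \sigma_2\,\mathrm{d}V_s\big) - \mu t$, so that $Z$ is a Brownian motion with constant drift $-\mu$ and (random, adapted) instantaneous variance $\rho_s := \sigma_1^2 + \sigma_2^2 - 2\sigma_1\sigma_2\,\xi_s$, where $\xi_s\in[-1,1]$ is the Lebesgue density of $\mathrm{d}[B,V]_s$. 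Because $\sigma_1\sigma_2>0$, the choice $V=-B$ gives $\xi\equiv-1$ and hence the maximal variance $\sigma_+^2$, while $V=B$ gives $\xi\equiv+1$ and the minimal variance $\sigma_-^2$; the claim is that these extremal (constant-volatility) choices solve~\eqref{pm}.

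First I would set up the candidate value functions. For the $(+)$ problem, let $w_+$ solve the ODE $\tfrac12\sigma_+^2 w_+'' - \mu w_+' - q w_+ = 0$ on $(0,\infty)$ with $w_+(0)=1$ and $w_+$ bounded (equivalently $w_+(z)=\mathrm{e}^{-\beta_+ z}$ for $z\ge 0$ and symmetrically on $z<0$, where $\beta_\pm$ is the appropriate root); $w_+(Z_0)=\mathbb{E}(\mathrm{e}^{-q\tau(-B)})$ by the standard Laplace-transform-of-hitting-time computation for Brownian motion with drift. Likewise let $w_-$ be built from variance $\sigma_-^2$ for the $(-)$ problem. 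The key verification is a Bellman (supermartingale/submartingale) argument: I would show that for \emph{every} $V\in\V$, the process $M_t := \mathrm{e}^{-qt} w_+(Z_t)$ is a supermartingale (for the sup problem) up to $\tau(V)$. By It\^o, the drift of $M_t$ is $\mathrm{e}^{-qt}\big(\tfrac12\rho_t w_+''(Z_t) - \mu w_+'(Z_t) - q w_+(Z_t)\big)$; since $w_+$ satisfies the ODE with $\rho=\sigma_+^2$, this drift equals $\mathrm{e}^{-qt}\cdot\tfrac12(\rho_t - \sigma_+^2) w_+''(Z_t)$, which is $\le 0$ provided $w_+''\ge 0$, i.e.\ $w_+$ is convex — which holds since $w_+$ is a decaying exponential on each side of $0$ (one must also handle the kink at $0$, but $\tau$ stops us there, so this is harmless, or one notes $w_+$ is convex across $0$ too as $w_+\ge$ its tangent). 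Optional stopping then gives $w_+(Z_0)\ge \mathbb{E}(\mathrm{e}^{-q\tau(V)} w_+(0)) = \mathbb{E}(\mathrm{e}^{-q\tau(V)})$, with equality for $V=-B$. The $(-)$ case is symmetric, using that $\rho_t\ge\sigma_-^2$ and the \emph{sub}martingale property to get the reverse inequality.

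The appendix lemma cited in the excerpt is what licenses writing $\mathrm{d}[B,V]_s = \xi_s\,\mathrm{d}s$ with a predictable density $\xi_s\in[-1,1]$, so that $Z$ genuinely has the claimed form with a controlled, bounded instantaneous variance. The main obstacle I anticipate is not the It\^o/convexity computation but the rigorous justification of optional stopping: one must check integrability and handle the case $\tau(V)=\infty$ (where $\mathrm{e}^{-q\tau(V)}=0$, consistent with $M_t\to 0$ since $w_+$ is bounded and $\mathrm{e}^{-qt}\to0$), and confirm the supermartingale property holds up to the stopping time despite the non-smoothness of $w_+$ at the origin — most cleanly done by applying It\^o on $(0,\infty)$ and $(-\infty,0)$ separately and invoking the Meyer–It\^o / Tanaka formula to control the local time term at $0$, whose sign works in our favour (the kink of a convex function contributes a nonnegative local-time term, preserving the supermartingale inequality). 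Once these technical points are dispatched, the theorem follows immediately.
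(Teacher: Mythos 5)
Your proposal is correct and is essentially the paper's own proof written in the log-coordinate $z=\log(x/y)$: your candidate $w_\pm(z)=\mathrm{e}^{-k_\pm z}$ is exactly the paper's $\Psi^{(\pm)}(x,y)=(y/x)^{k_\pm}$ in~\eqref{eq:Rerpresentaion_Val_Fun}, your convexity requirement $w_\pm''\ge 0$ is precisely the condition $\Psi^{(\pm)}_{xy}\le 0$, and your supermartingale/submartingale verification via the density of $\mathrm{d}[B,V]_s$ in $[-1,1]$ (Lemma~\ref{bm}) plus optional sampling is the content of Lemmas~\ref{bp}, \ref{gen} and~\ref{ls}. One correction: your aside that $w_+$ is ``convex across $0$'' is false --- gluing the two decaying exponentials produces a concave kink (a maximum) at $0$ --- but this is immaterial for the reason you give first: after the symmetry reduction to $(x,y)\in D$ the process is stopped on first hitting $0$, so $w_\pm$ is only ever needed on $[0,\infty)$; you should also dispatch separately the degenerate case $\sigma_-=0$, $\mu\le 0$, where your bounded ODE solution does not exist but $\tau(B)=\infty$ a.s.\ makes the infimum trivially attained, as the paper does via~\eqref{eq:Bad_Case}.
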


\begin{remark}
\begin{enumerate}[(i)]
\item
Observe that by Theorem~\ref{thm},
the mirror coupling
$V^{(+)} = -B$
solves Problem~\eqref{min}
and the synchronous coupling
$V^{(-)} = +B$
is the solution to Problem~\eqref{max}.
\item
Note
that the solution depends neither on the parameters in~\eqref{parameters}
nor on the discount rate
$q$.
\end{enumerate}
\end{remark}

%\eqref{pm} for any $q > 0$ is given by the synchronous coupling, i.e.
%Problems \eqref{q0} are solved by any Brownian motion, i.e.\ for all $V
%\in \V$ $$\E(\tau(V)) = \E(\tau(B)) = \E(\tau(-B)).$$

\subsubsection{Proof of Theorem~\ref{thm}}
\label{subsec:Proof_inf_hor}

Observe that, due to the symmetry in
Problem~\eqref{pm},
we may assume without loss of generality
that
%it suffices to prove
%Theorem~\ref{thm} if
the starting points
$x,y$
in~\eqref{gbmx}--\eqref{parameters}
satisfy
$(x,y)\in D$,
where the set
$D\subset\R^2$
is given by
\begin{equation}
\label{eq:Def_D}
% D:=\{(a,b)\>:\> a\geq b>0\}.
D:=\{(a,b);\; a\geq b>0\}.
\end{equation}
Fix $q > 0$ %until the last subsection, when the case $q = 0$ will be dealt with.
and define the following function,
closely related to the right-hand side in
Problem~\eqref{pm}:
\begin{align}
\label{eq:val_fun_pm}
\Psi^{(\pm)}(x,y) & := \E_{x,y} \left( \mathrm{e}^{-q\tau(\mp B)} \right),
\qquad (x,y)\in D.
%\\ \psi^I(x,y) & = \E_{x,y} \left( \mathrm{e}^{-q\tau(B)} \right).
\end{align}
%where $\pm$ and $\mp$ simultaneously denote either $+$ and $-$, or $-$ and $+$.
The proof of Theorem~\ref{thm} is in two steps:
%In the proof we will first look at a generalisation of Problems \eqref{pm},
we first establish sufficient conditions for a function
$\Psi:D\to\R_+$
implying that
$\pm \Psi$
is equal to the right-hand side in
%the value function in
Problem~\eqref{pm} (Lemmas~\ref{bp} and~\ref{gen}),
and then prove that
$\Psi^{(\pm)}$
%$\psi^S$ and $\psi^I$
in~\eqref{eq:val_fun_pm}
satisfies these conditions
(Lemma~\ref{ls}).
Throughout the paper we denote
$\R_+ := [0,\infty)$.

For any measurable function
$\Psi:D\to\R_+$
and Brownian motion
$V\in\V$,
consider the process
$U(V,\Psi)=(U_t(V,\Psi))_{t\in[0,\infty)}$
defined by
\begin{equation}
\label{eq:Proc_U}
U_t(V,\Psi) :=
\mathrm{e}^{-q(t \wedge \tau(V))}\Psi(X_{t \wedge \tau(V)}, Y_{t \wedge \tau(V)}(V))
%\qquad t\geq0
\end{equation}
%where
%$V\in\V$
(here and in the rest of the paper we denote
$s\wedge t := \min(s,t)$).
Then the following lemma 
(a suitable version of Bellman's principle)
holds.

\begin{lemma}
\label{bp}
Let
$\Psi:D\to\R_+$
be a bounded continuous function
satisfying
$\Psi(x,x)=1$
for all
$x>0$.
If, for every
$(x,y)\in D$,
the process
$\pm U(V,\Psi)$
%defined in~\eqref{eq:Proc_U}
is a $\p_{x,y}$-supermartingale
for all $V \in \mathcal{V}$
and
$U(\mp B,\Psi)$
is a $\p_{x,y}$-martingale,
then
$V^{(\pm)}=\mp B$
solves Problem~\eqref{pm}. %(+) (resp.~(-)).
%where $\pm$ and $\mp$ denote either $+$ and $-$, or $-$ and $+$.
\end{lemma}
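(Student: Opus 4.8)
The plan is to use the stated hypotheses to sandwich the quantity $\pm\E_{x,y}(\mathrm{e}^{-q\tau(V)})$ between two sides that coincide for the candidate $V=\mp B$. First I would fix $(x,y)\in D$ and an arbitrary $V\in\V$, and apply the supermartingale property of $\pm U(V,\Psi)$ between times $0$ and $t$, together with the optional sampling theorem (the process is bounded since $\Psi$ is bounded and $q>0$), to obtain
\begin{equation}
\label{eq:bp_super}
\pm\Psi(x,y)=\pm U_0(V,\Psi)\geq \pm\E_{x,y}\bigl(U_t(V,\Psi)\bigr)=\pm\E_{x,y}\bigl(\mathrm{e}^{-q(t\wedge\tau(V))}\Psi(X_{t\wedge\tau(V)},Y_{t\wedge\tau(V)}(V))\bigr).
\end{equation}
The second step is to let $t\to\infty$ on the right-hand side and identify the limit. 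On the event $\{\tau(V)<\infty\}$ the integrand converges to $\mathrm{e}^{-q\tau(V)}\Psi(X_{\tau(V)},Y_{\tau(V)}(V))=\mathrm{e}^{-q\tau(V)}$, using the continuity of $\Psi$ and the boundary condition $\Psi(x,x)=1$ at the diagonal where $X_{\tau(V)}=Y_{\tau(V)}(V)$; on $\{\tau(V)=\infty\}$ the factor $\mathrm{e}^{-q(t\wedge\tau(V))}=\mathrm{e}^{-qt}\to 0$ while $\Psi$ stays bounded, so the integrand vanishes. Boundedness of $\Psi$ and $q>0$ give a dominating function $\|\Psi\|_\infty$, so by dominated convergence the right-hand side of~\eqref{eq:bp_super} tends to $\pm\E_{x,y}(\mathrm{e}^{-q\tau(V)}\mathbf{1}_{\{\tau(V)<\infty\}})=\pm\E_{x,y}(\mathrm{e}^{-q\tau(V)})$ (with the convention $\mathrm{e}^{-q\infty}=0$). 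Hence $\pm\Psi(x,y)\geq\pm\E_{x,y}(\mathrm{e}^{-q\tau(V)})$ for every $V\in\V$, i.e. $\pm\Psi(x,y)$ is an upper bound for the supremum in Problem~\eqref{pm}.

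The third step repeats the same computation with $V=\mp B$, this time using that $U(\mp B,\Psi)$ is a genuine martingale, so that~\eqref{eq:bp_super} holds with equality: $\pm\Psi(x,y)=\pm\E_{x,y}(\mathrm{e}^{-q(t\wedge\tau(\mp B))}\Psi(\dots))$, and the same passage to the limit yields $\pm\Psi(x,y)=\pm\E_{x,y}(\mathrm{e}^{-q\tau(\mp B)})$. Combining this with the inequality from the previous step shows that the supremum in Problem~\eqref{pm} is attained at $V^{(\pm)}=\mp B$, which is exactly the claim. Finally I would note that the reduction to $(x,y)\in D$ made at the start of the subsection (the symmetry $x\leftrightarrow y$ swaps the roles of $B$ and $V$ up to the sign convention) removes any loss of generality.

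The only delicate point is the interchange of limit and expectation and, more specifically, the behaviour of the integrand on $\{\tau(V)=\infty\}$: one must be careful that $\Psi$ need not have a limit along $(X_t,Y_t(V))$ there, which is why the argument leans on $\mathrm{e}^{-qt}\to 0$ and the boundedness of $\Psi$ rather than on any convergence of $\Psi$ itself. Everything else is a routine application of optional sampling to a bounded càdlàg (super)martingale. I expect the verification that the hypotheses of this lemma actually hold for the explicit function $\Psi^{(\pm)}$ in~\eqref{eq:val_fun_pm} — carried out later in Lemma~\ref{ls} via an It\^o/Bellman computation — to be the genuinely substantive part, but that is outside the scope of the present lemma.
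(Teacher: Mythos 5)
Your proof is correct and follows essentially the same route as the paper: the supermartingale inequality at finite times, passage to the limit via dominated convergence (with the diagonal boundary condition $\Psi(x,x)=1$ handling $\{\tau(V)<\infty\}$ and the discount factor $\mathrm{e}^{-qt}$ handling $\{\tau(V)=\infty\}$), and equality for $V=\mp B$ from the martingale property. The paper's version is merely terser, compressing your limiting argument into a single invocation of the supermartingale property and the Dominated Convergence Theorem.
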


\begin{proof}
Since 
$X_{\tau(V)}= Y_{\tau(V)}(V)$ $\p_{x,y}$-a.s. 
on the event
$\{\tau(V)<\infty\}$ 
for any $V \in \V$,
$\Psi$
is continuous and bounded,
$\Psi(x,x) = 1$ holds for any $x > 0$ 
and
$q>0$,
the supermartingale property and 
the Dominated Convergence Theorem
imply
%Fatou's lemma
%and $x \geq y > 0$. Using that and
%An application of the optional stopping theorem (definition~\eqref{eq:Proc_U}
%implies that the supermartingale 
%$\pm U^q(V,\Psi)$
%is bounded and hence converges $\p_{x,y}$-a.s.
%and in 
%$L^1$
%to the random variable
%$\pm U^q_{\tau(V)}(V,\Psi) \I_{\{\tau(V)<\infty\}}$)
%are
%uniformly integrable since $\Psi$ is bounded) 
%yields
$$\pm \E_{x,y} \left( \mathrm{e}^{-q\tau(V)} \right) =
\E_{x,y} \left(\pm U_{\tau(V)}(V,\Psi)\I_{\{\tau(V)<\infty\}} \right) \leq
\E_{x,y} \left(\pm U_{0}(V,\Psi) \right) =\pm \Psi(x,y),\quad (x,y) \in D,$$
for all $V \in \V$
($\I_{\{\cdot\}}$ denotes
the indicator of the event 
$\{\cdot\}$).
Since
$U(\mp B,\Psi)$
is a martingale, for
$V^{(\pm)}=\mp B$
this inequality becomes an equality and
the lemma follows.
\end{proof}

Our next task is to establish a verification lemma
for Problem~\eqref{pm}.
%for
%the function
%$\Psi^{(\pm)}$
%in~\eqref{eq:val_fun_pm},
Let
$D^\circ$
be the interior (in
$\R^2$)
of the set
$D$
defined in~\eqref{eq:Def_D}.
For any twice differentiable function
$f\in\mathcal{C}^{2,2}(D^\circ)$
we define the function
$\mathcal{L}^{(\pm)}f$
by the formula
%for our problems,
%we need to define
%for any $f\in\mathcal{C}^{2,2}(D^\circ \times D^\circ)$,
%where
%$D^\circ$
%is the interior (in
%$\R^2$)
%of the set
%$D$
%in~\eqref{eq:Def_D},
%by
\begin{equation}
\label{eq:Def_L_pm}
\left(\mathcal{L}^{(\pm)}f\right)(x,y) :=
\left(a_1 xf_x + a_2 yf_y + \frac{1}{2}\s_1^2x^2f_{xx} +
 \frac{1}{2}\s_2^2y^2f_{yy} \mp \s_1\s_2xy f_{xy} - q f\right)(x,y),
\end{equation}
%\left(
%a_1 x\partial_{1}f + a_2 y\partial_{2}f + \frac{1}{2}\s_1^2x^2\partial_{1,1}f +
% \frac{1}{2}\s_2^2y^2\partial_{2,2}f \mp \s_1\s_2xy \partial_{1,2}f - qf\right)(x,y)
where
$(x,y)\in D^\circ$
%$\pm$ and $\mp$ denote either $+$ and $-$, or $-$ and $+$,
and
$f_x,f_y,f_{xx},f_{yy}$
and
$f_{xy}$
denote the partial derivatives of
$f$.
For any function
$\Psi:D\to\R_+$,
such that
$\Psi\in\mathcal{C}^{2,2}(D^\circ)$,
and Brownian motion
$V\in\V$,
the local martingale
$M(V,\Psi)=(M_t(V,\Psi))_{t\in[0,\infty)}$,
given by
\begin{equation}
\label{eq:Def_M}
M_t(V,\Psi) := \int_0^{t \wedge \tau(V)} \mathrm{e}^{-q s}
\big( \s_1 X_s \Psi_x(X_s, Y_s(V)) \,\mathrm{d}B_s+ \s_2 Y_s(V)
\Psi_y(X_s, Y_s(V)) \,\mathrm{d}V_s \big),
\end{equation}
is well-defined.
%for all $t\geq0$.

\begin{lemma} \label{gen}
Assume the following hold:
(I) $\Psi:D\to\R_+$
is a bounded continuous  function with
$\Psi(x,x)=1$
for all $x>0$;
(II) $\Psi\in\mathcal{C}^{2,2}(D^\circ)$
and, in the interior
$D^\circ$,
$\Psi_{xy} \leq 0$ and $\mathcal{L}^{(\pm)}\Psi = 0$;
%(where $\pm$ %and $\mp$
%denotes either $+$ or $-$); %, or $-$ and $+$;
(III) $M(V,\Psi)$
is a $\p_{x,y}$-martingale
for all $(x,y)\in D$
and $V \in \V$.
Then  
for any $(x,y)\in D$,
$V \in \V$,
the process
$\pm U(V,\Psi)$,
defined in~\eqref{eq:Proc_U},
is a $\p_{x,y}$-supermartingale
and 
$U(\mp B,\Psi)$
is a $\p_{x,y}$-martingale.
%for all $(x,y)\in D$
%and $V \in \V$.
\end{lemma}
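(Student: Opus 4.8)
The plan is to apply Itô's formula to the process $e^{-qt}\Psi(X_t,Y_t(V))$ on the stochastic interval $[0,\tau(V)]$, identify the finite-variation part as an integral of $(\mathcal{L}^{(\pm)}\Psi)(X_s,Y_s(V))$ against $ds$ plus a correction term coming from the covariation $\mathrm{d}[B,V]_s$, and then use the sign hypotheses $\Psi_{xy}\le 0$ and $\mathcal{L}^{(\pm)}\Psi=0$ to control that drift. First I would fix $(x,y)\in D$ and $V\in\V$, write $\mathrm{d}[B,V]_s=\rho_s\,\mathrm{d}s$ for some predictable process $\rho=(\rho_s)$ with values in $[-1,1]$ (this is exactly the "well-known lemma from stochastic analysis'' of Appendix~\ref{app}, which legitimises the use of Bellman's principle), and apply Itô to $U_t(V,\Psi)$ up to the stopping time $t\wedge\tau(V)$. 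The drift term of $\pm U(V,\Psi)$ then reads
\begin{equation*}
\pm e^{-qs}\Big(a_1 X_s\Psi_x + a_2 Y_s\Psi_y + \tfrac12\s_1^2 X_s^2\Psi_{xx} + \tfrac12\s_2^2 Y_s^2\Psi_{yy} + \s_1\s_2 X_s Y_s \rho_s \Psi_{xy} - q\Psi\Big)(X_s,Y_s(V))\,\mathrm{d}s,
\end{equation*}
and the martingale term is precisely $\pm M(V,\Psi)$ from~\eqref{eq:Def_M}, which is a true martingale by hypothesis~(III).

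The key algebraic observation is that the bracketed drift equals $(\mathcal{L}^{(\pm)}\Psi)(X_s,Y_s) + \s_1\s_2 X_s Y_s(\rho_s\pm 1)\Psi_{xy}(X_s,Y_s)$, since $\mathcal{L}^{(\pm)}$ carries the coefficient $\mp\s_1\s_2 xy\,f_{xy}$. By hypothesis~(II), $\mathcal{L}^{(\pm)}\Psi\equiv 0$ on $D^\circ$, so the drift reduces to $\pm e^{-qs}\s_1\s_2 X_s Y_s(\rho_s\pm 1)\Psi_{xy}(X_s,Y_s)\,\mathrm{d}s$. Now $\s_1\s_2>0$ by~\eqref{parameters}, $X_s,Y_s>0$, $\Psi_{xy}\le 0$, and $\rho_s\in[-1,1]$ forces $\rho_s+1\ge 0$ and $\rho_s-1\le 0$; hence for the $+$ case the factor $\pm(\rho_s+1)=\rho_s+1\ge 0$ multiplied by the nonpositive $\Psi_{xy}$ gives a nonpositive drift, and for the $-$ case the factor $\pm(\rho_s-1)=-(\rho_s-1)=1-\rho_s\ge 0$ times $\Psi_{xy}\le 0$ again gives a nonpositive drift. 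So in both cases $\pm U(V,\Psi)$ has nonpositive drift and a martingale part, i.e. it is a $\p_{x,y}$-supermartingale — modulo the usual integrability check, which follows because $\Psi$ is bounded by~(I) and $M(V,\Psi)$ is a genuine martingale by~(III).

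Finally, for the boundary choice $V^{(\pm)}=\mp B$ we have $\mathrm{d}[B,V^{(\pm)}]_s=\mp\,\mathrm{d}s$, i.e. $\rho_s\equiv\mp 1$, so the correction factor $\rho_s\pm 1$ vanishes identically; the drift is then exactly $(\mathcal{L}^{(\pm)}\Psi)(X_s,Y_s)=0$, and $U(\mp B,\Psi)$ equals its martingale part plus its initial value, hence is a $\p_{x,y}$-martingale. The main technical obstacle is a soft one: justifying that the candidate martingale part really is a true martingale (not merely a local one) and that $\pm U(V,\Psi)$ stays integrable on $[0,\infty)$ rather than just on compact intervals; the first is assumption~(III), and the second rides on the boundedness of $\Psi$ from~(I), so there is no real difficulty — the only care needed is to make sure Itô's formula is applied only in the open region $D^\circ$ where $\Psi$ is $\mathcal{C}^{2,2}$, using that the stopped process does not leave $D$ and meets the diagonal $\{x=x\}$ only at $\tau(V)$, where the stopping already freezes it.
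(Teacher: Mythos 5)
Your proposal is correct and follows essentially the same route as the paper: Itô's formula applied to $\mathrm{e}^{-qt}\Psi(X_t,Y_t(V))$ stopped at $\tau(V)$, the representation $\mathrm{d}[B,V]_s=C_s\,\mathrm{d}s$ with $C_s\in[-1,1]$ from the appendix lemma, cancellation of the $\mathcal{L}^{(\pm)}\Psi$ part, and the sign analysis of the remaining drift $\pm\s_1\s_2 X_sY_s(C_s\pm1)\Psi_{xy}\le0$, with the choice $V=\mp B$ making this term vanish so that (III) yields the martingale property. Your sign bookkeeping and the treatment of integrability via boundedness of $\Psi$ and assumption~(III) match the paper's argument.
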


\begin{proof}
The definition of
$X$
and
$Y(V)$
in~\eqref{gbmx}
and
Lemma~\ref{bm} in the Appendix imply
$\mathrm{d}[X, Y(V)]_t = C_t \s_1 X_t \s_2 Y_t(V) \,\mathrm{d}t$,
where
$C=(C_t)_{t\in[0,\infty)}$
is $(\F_t)$-adapted and
$\p(C_t\in[-1,1])=1$ for all
$t\in[0,\infty)$.
It\^o's lemma, the assumptions in Lemma~\ref{gen}
and definition~\eqref{eq:Proc_U}
of
$U(V,\Psi)$
yield
\begin{equation*}
\pm U_t(V,\Psi)  = \pm \Psi(x,y) \pm M_{t}(V,\Psi)
+\int_0^{t \wedge \tau(V)} \mathrm{e}^{-qs} \s_1 \s_2 (1 \pm C_s )X_sY_s(V) \Psi_{xy}(X_s, Y_s(V))  \,\mathrm{d}s
\end{equation*}
for all  $(x,y)\in D$ and $V \in \V$.
Since
$X$,
$Y(V)$
and
$1\pm C$
are non-negative processes and, by assumption~\eqref{parameters},
we have $\s_1\s_2>0$,
the integrand
in the representation of
$\pm U(V,\Psi)$
is non-positive,
making 
$\pm U(V,\Psi)$
a $\p_{x,y}$-supermartingale.
This representation, together with assumption~(III),
implies that 
$U(\mp B,\Psi)$
is a $\p_{x,y}$-martingale.
%has the same sign as
%$\Psi_{xy}(X, Y(V))$,
%which
%, by the assumption on the
%sign of
%$\pm \Psi_{xy}$,
%concludes the proof. 
\end{proof}

%Let 
%$V^{(\pm)} = \mp B$
%as in Theorem~\ref{thm},
%where
%$\pm$ and $\mp$ are either $+$ and $-$, or $-$ and $+$.
%Recall that 
%$\mp$ denotes either $-$ or $+$
Note the following equivalence:
\begin{equation}
\label{eq:Bad_Case}
%\p_{x,y}(\tau(V^{(\pm)})=\infty)=1\quad
\p_{x,y}(\tau(\mp B)=\infty)=1\quad
\text{for all $(x,y)\in D^\circ$} 
\iff
\mp = +,\> \sigma_2=\sigma_1,\> a_2\leq a_1.
\end{equation}
It is clear that 
under condition~\eqref{eq:Bad_Case}
Theorem~\ref{thm}
holds.
Lemmas~\ref{bp}
and~\ref{gen}
imply that in order to establish
%solve Problem~\eqref{pm}
Theorem~\ref{thm} in general,
it is sufficient to prove that,
when~\eqref{eq:Bad_Case} fails,
the function
$\Psi^{(\pm)}:D\to\R_+$
in~\eqref{eq:val_fun_pm}
satisfies the assumptions of Lemma~\ref{gen}.
More precisely, the following lemma holds.

\begin{lemma}
\label{ls}
Assumptions~(I)--(III) 
of Lemma~\ref{gen} hold for the function
$\Psi^{(\pm)}:D\to\R_+$
in~\eqref{eq:val_fun_pm},
if for some
$(x,y)\in D^\circ$
we have
$\p_{x,y}(\tau(\mp B)=\infty)<1$.
%($\pm$ and $\mp$ are either $+$ and $-$, or $-$ and $+$).
\end{lemma}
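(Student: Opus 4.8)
The plan is to identify the function $\Psi^{(\pm)}$ explicitly and verify assumptions (I)--(III) of Lemma~\ref{gen} directly. First I would reduce to one dimension. As noted at the end of Section~\ref{sec:Setting}, the coupling time $\tau(\mp B)$ equals the first time the one-dimensional process $V_t=(\s_1 B_t - \mu t + \log(x/y))/\s_2$, with $V=\mp B$, hits zero; equivalently, writing $Z_t:=\log(X_t/Y_t(\mp B))$, the process $Z$ is (under $\p_{x,y}$) a Brownian motion with drift started at $z:=\log(x/y)\geq 0$. A short computation using~\eqref{gbmx} and It\^o's formula gives $\mathrm{d}Z_t = \sigma_\pm\,\mathrm{d}B_t - \mu\,\mathrm{d}t$ in the synchronous case and with the opposite Brownian sign in the mirror case; in either case $Z$ has variance parameter $\sigma_\pm^2$ and drift $-\mu$, and $\tau(\mp B)=\inf\{t\geq 0: Z_t=0\}$. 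Hence $\Psi^{(\pm)}(x,y)=g_\pm(\log(x/y))$, where $g_\pm(z)=\E_z(\mathrm{e}^{-q\tau_0})$ is the Laplace transform of the first hitting time of $0$ by a Brownian motion with drift $-\mu$ and volatility $|\sigma_\pm|$, started at $z\geq 0$. This is a classical quantity: $g_\pm(z)=\exp(-\lambda_\pm z)$ for $z\geq 0$, where $\lambda_\pm>0$ is the positive root of $\tfrac12\sigma_\pm^2\lambda^2+\mu\lambda-q=0$, i.e. $\lambda_\pm=(\mu+\sqrt{\mu^2+2q\sigma_\pm^2})/\sigma_\pm^2$. (When~\eqref{eq:Bad_Case} fails one checks $\sigma_\pm\neq 0$, so this is well defined; indeed the hypothesis $\p_{x,y}(\tau(\mp B)=\infty)<1$ is exactly what rules out the degenerate synchronous case.)

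With this closed form in hand, the verification is essentially bookkeeping. For (I): $\Psi^{(\pm)}(x,y)=(x/y)^{-\lambda_\pm}$, which is continuous on $D$, takes the value $1$ on the diagonal $x=y$, and is bounded on $D$ precisely because $x\geq y>0$ forces $x/y\geq 1$ and $\lambda_\pm>0$, so $0<\Psi^{(\pm)}\leq 1$. For (II): $\Psi^{(\pm)}$ is smooth on $D^\circ$; computing the mixed partial of $(x/y)^{-\lambda_\pm}$ gives $\Psi^{(\pm)}_{xy}=-\lambda_\pm^2\, x^{-\lambda_\pm-1}y^{\lambda_\pm-1}<0$ on $D^\circ$ (here I would note $\lambda_\pm\neq 0$), giving the sign condition $\Psi_{xy}\leq 0$. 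The PDE identity $\mathcal{L}^{(\pm)}\Psi^{(\pm)}=0$ then reduces, after substituting the homogeneous ansatz $x^{-\lambda_\pm}y^{\lambda_\pm}$ into~\eqref{eq:Def_L_pm}, to exactly the quadratic $\tfrac12\sigma_\pm^2\lambda_\pm^2+\mu\lambda_\pm-q=0$ satisfied by $\lambda_\pm$ — one uses the definitions $\sigma_\pm=\s_2\pm\s_1$ and $\mu=a_2-a_1+\s_1^2/2-\s_2^2/2$ from~\eqref{def:mu_sigma_pm} and the $\mp\s_1\s_2$ sign in front of the cross term to see the $\s_1\s_2$ pieces combine correctly into $\pm\s_1\s_2\lambda_\pm^2$, which is what turns $\s_1^2+\s_2^2$ into $(\s_2\pm\s_1)^2=\sigma_\pm^2$.

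For (III), the martingale property of $M(V,\Psi^{(\pm)})$ in~\eqref{eq:Def_M}: the integrands are $\mathrm{e}^{-qs}\s_1 X_s\Psi_x(X_s,Y_s(V))$ and $\mathrm{e}^{-qs}\s_2 Y_s(V)\Psi_y(X_s,Y_s(V))$, and since $x\Psi_x=-\lambda_\pm\Psi$ and $y\Psi_y=\lambda_\pm\Psi$ by homogeneity, these are bounded in absolute value by $\lambda_\pm\max(|\s_1|,|\s_2|)\mathrm{e}^{-qs}$ (using $0<\Psi^{(\pm)}\leq 1$ on $D$, and that the process stays in $D$ up to $\tau(V)$ only after the reduction to $z\geq 0$ — more carefully, $|\Psi^{(\pm)}(X_s,Y_s(V))|\leq 1$ holds on $\{s<\tau(V)\}$ when $x\geq y$ because... actually one should be slightly careful here, since for general $V$ the pair $(X_s,Y_s(V))$ need not stay in $D$). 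I expect this last point to be the main obstacle: boundedness of $\Psi^{(\pm)}$ was only claimed on $D$, so to control the stochastic integrals for an arbitrary $V\in\V$ up to time $\tau(V)$ I would either extend $\Psi^{(\pm)}$ to all of $\R_+^2$ by the same formula $(x/y)^{-\lambda_\pm}$ and observe that the integrands $\mathrm{e}^{-qs}\s_i\cdot(\text{homogeneous degree-}0)$ have exponentially bounded $L^2$ norms because the homogeneous factors $X_s\Psi_x,Y_s\Psi_y$ are constant multiples of $\Psi$ and $\E_{x,y}[\Psi(X_s,Y_s(V))^2]$ can be bounded uniformly in $s$ and $V$ using the explicit GBM dynamics (the $-\lambda_\pm$ power of a GBM ratio has moments controlled by the drift/volatility parameters only), or invoke a localisation argument and dominated convergence. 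Once the $L^2$ integrability $\E\int_0^t \mathrm{e}^{-2qs}(\cdots)^2\,\mathrm{d}s<\infty$ is in place, $M(V,\Psi^{(\pm)})$ is a genuine martingale by the standard criterion, completing (III) and hence the proof that $\Psi^{(\pm)}$ satisfies (I)--(III).
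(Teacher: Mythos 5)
Your route is essentially the paper's: reduce to the one-dimensional first-passage problem for $\log(X/Y(\mp B))$ (a Brownian motion with drift $-\mu$ and volatility $|\sigma_\pm|$ started at $\log(x/y)$), obtain the explicit power representation of $\Psi^{(\pm)}$, and check (I)--(III) by direct computation. There is, however, one genuine gap: your claim that the hypothesis $\p_{x,y}(\tau(\mp B)=\infty)<1$ rules out $\sigma_\pm=0$ is false. The failure of~\eqref{eq:Bad_Case} only excludes the combination $\pm=-$, $\s_1=\s_2$, $\mu\leq 0$; the case $\pm=-$, $\s_1=\s_2$ (so $\sigma_-=0$) with $\mu>0$ does satisfy the hypothesis --- there $\tau(B)=\log(x/y)/\mu$ is finite and deterministic --- and your formula $\lambda_\pm=(\mu+\sqrt{\mu^2+2q\sigma_\pm^2})/\sigma_\pm^2$ is undefined in it. The paper handles this by taking $k_-=q/\mu$, the root of the degenerate (linear) equation $\mu k-q=0$, and all verifications go through with $\Psi^{(-)}(x,y)=(y/x)^{q/\mu}$; your proof as written simply omits this case.

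Two smaller points. First, your explicit formula for $\lambda_\pm$ has the wrong sign on $\mu$: the positive root of $\tfrac12\sigma_\pm^2\lambda^2+\mu\lambda-q=0$ is $\lambda_\pm=\bigl(-\mu+\sqrt{\mu^2+2q\sigma_\pm^2}\bigr)/\sigma_\pm^2$, which is the paper's $k_\pm$; since your verification of (II) only uses the quadratic relation, the slip is harmless but should be corrected. Second, the ``main obstacle'' you flag in (III) is not an obstacle: for every $V\in\V$ the pair $(X_s,Y_s(V))$ stays in $D$ for all $s\leq\tau(V)$, because $X-Y(V)$ has continuous paths and $\tau(V)$ is its first zero, and the stochastic integrals in~\eqref{eq:Def_M} are stopped at $\tau(V)$. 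Hence $x\Psi^{(\pm)}_x=-k_\pm\Psi^{(\pm)}$ and $y\Psi^{(\pm)}_y=k_\pm\Psi^{(\pm)}$ are evaluated only on $D$, where $0<\Psi^{(\pm)}\leq 1$, so the integrands are bounded and square integrable --- exactly the paper's one-line argument. Your fallback via exponential moment bounds of the extended function for fixed $t$ would also work, but it is unnecessary.
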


\begin{proof}
%It clearly follows from~\eqref{gbmx}
%that in the case
%$\pm$
%equals
%$-$,
%$\s_1=\s_2$
%and
%$a_1\geq a_2$,
%we have
%$\Psi^{(-)}(x,y)=0$
%for all
%$(x,y)\in D^\circ$.
%Since in this case it still holds
%$\Psi^{(-)}(x,x)=1$
%for all
%$x>0$,
%assumptions~(I)--(III)
%of Lemma~\ref{gen}
%are satisfied.

Under the assumption of the lemma,
the following representation holds:
\begin{equation}
\label{eq:Rerpresentaion_Val_Fun}
\Psi^{(\pm)}(x,y) =
%\E_{x,y} \left( \mathrm{e}^{-q\tau_c} \right) = \mathrm{e}^{-ck} =
\left( \frac{y}{x} \right)^{k_\pm}\quad
\text{for
$(x,y) \in D$,}
\end{equation}
where 
$$
k_\pm :=
\left\{ \begin{array}{ll}
-\mu/\s_\pm^2 + \sqrt{(\mu/\s_\pm^2)^2 + 2q/\s_\pm^2}, & \textrm{if $\s_\pm \neq 0$,}\\
q/\mu, & \textrm{if $\s_\pm=0$,}
\end{array} \right.
$$
and 
$\s_\pm$
and
$\mu$
are defined in~\eqref{def:mu_sigma_pm}.
Since, by assumption, the condition on the right-hand side in~\eqref{eq:Bad_Case}
is not satisfied, 
the equality
$\s_\pm=0$ implies
$\mu>0$,
making
$k_\pm$
a well-defined real number. 
Formula~\eqref{eq:Rerpresentaion_Val_Fun}
follows from the fact that
$\tau(\mp B)$
has the same law as the first-passage time
of the Brownian motion with drift
$(\s_\pm B_t + \mu t)_{t\in[0,\infty)}$
over the level
$\log(x/y)$.
The Laplace transform of this random time 
is given in \cite[p.\ 295]{Borodin}
and amounts to the right-hand side of~\eqref{eq:Rerpresentaion_Val_Fun}.

Assumption~(I)
in Lemma~\ref{gen}
follows from~\eqref{eq:Rerpresentaion_Val_Fun}. 
Furthermore
it is clear that
$\Psi^{(\pm)}\in\mathcal{C}^{2,2}(D^\circ)$.
The formula in~\eqref{eq:Rerpresentaion_Val_Fun}
and some simple calculations imply that
for $(x,y)\in D^\circ$
the following holds:
\begin{equation}
\label{eq:First_Der_Psi}
\Psi^{(\pm)}_x(x,y) = -\frac{k_\pm}{x}\,\Psi^{(\pm)}(x,y), \quad
\Psi^{(\pm)}_y(x,y) = \frac{k_\pm}{y}\,\Psi^{(\pm)}(x,y),
\end{equation}
and
$$
\Psi^{(\pm)}_{xy}(x,y) = -\frac{k_\pm^2}{xy}\,\Psi^{(\pm)}(x,y) \leq 0,\quad
\left(\mathcal{L}^{(\pm)}\Psi^{(\pm)}\right)(x,y)  = 0.
$$
Hence assumption~(II) of Lemma~\ref{gen} is also satisfied.
The equalities in~\eqref{eq:First_Der_Psi}
and the definition in~\eqref{eq:Def_M}
of the local martingale
$M(V,\Psi^{(\pm)})$
imply that the integrands in the stochastic
integrals are bounded processes and therefore
square integrable. Hence 
$M(V,\Psi^{(\pm)})$
is a $\p_{x,y}$-martingale for all $(x,y) \in D$
and $V \in \V$ and assumption~(III) of Lemma~\ref{gen}
also holds.
\end{proof}

%$$M_t^{V} = \int_0^{t \wedge \tau(V)} \mathrm{e}^{-qs} \big( -k \s_1
%\psi^{S}(X_s, Y_s(V)) + k \s_2 \psi^{S}(X_s, Y_s(V)) \big) \,\mathrm{d}B_s.$$
%For $s \leq \tau(V)$ we have $X_s \geq Y_s(V) > 0$ $\p_{x,y}$-a.s. Due to
%$\psi^S(x,y) \leq 1$ for $x \geq y > 0$, the integrand in the above expression
%is a bounded random variable. Hence $\E_{x,y}([M^V]_t) < \infty$ for all $t \geq
%0$, which implies that $M^V$ is
%a $\p_{x,y}$-martingale for all $x \geq y > 0$
%and $V \in \V$.  \end{proof}

\subsection{Stationary problems}
\label{subsec:Stationary}
Note first that Fubini's theorem and the Dominated Convergence Theorem imply
the existence of the limit:
\begin{equation}
\label{eq:Limsup_limit}
\lim_{T \to \infty} \frac{1}{T} \int_0^T \p(\tau(V) > t) \,\mathrm{d}t =
\lim_{T \to \infty} \E((\tau(V)/T) \wedge 1) = \p(\tau(V) = \infty).
\end{equation}
Hence the stationary problems from the introduction can be rephrased as:
%Consider the problems:
%\begin{equation}
%\label{inf/sup} \tag{Inf/Sup}
%\text{minimise/maximise} \quad \limsup_{T \to \infty} \frac{1}{T} \int_0^T \p(\tau(V) > t) \,\mathrm{d}t \quad \text{over}
%\quad V \in \V.
%\end{equation}
find
$V^{\text{inf}} \in \V$ and $V^{\text{sup}} \in \V$  such that
\begin{equation}
\label{S_min} \tag{SInf} \inf_{V \in \V} 
\lim_{T \to \infty} \frac{1}{T} \int_0^T \p(\tau(V) > t) \,\mathrm{d}t =
\lim_{T \to \infty} \frac{1}{T} \int_0^T \p(\tau(V^{\text{inf}}) > t) \,\mathrm{d}t
\end{equation}
and
\begin{equation}
\label{S_max} \tag{SSup} \sup_{V \in \V} 
\lim_{T \to \infty} \frac{1}{T} \int_0^T \p(\tau(V) > t) \,\mathrm{d}t =
\lim_{T \to \infty} \frac{1}{T} \int_0^T \p(\tau(V^{\text{sup}}) > t) \,\mathrm{d}t.
\end{equation} 
A solution to these problems, independent 
of the values of the parameters of the geometric Brownian motions 
in~\eqref{gbmx}, is given in the following proposition.
Note in particular that, unlike in the finite time horizon case, 
no new phenomena arise when the ergodic average  criterion is used 
(i.e. the solution is completely analogous to the infinite time 
horizon case).
\begin{proposition} \label{pthm}
The Brownian motions 
$V^{\text{inf}} = - B$
and
$V^{\text{sup}} = B$
solve 
Problems~\eqref{S_min} and~\eqref{S_max}
respectively.
\end{proposition}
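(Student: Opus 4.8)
\emph{Proof proposal.} The plan is to obtain Proposition~\ref{pthm} from Theorem~\ref{thm} by sending the discount rate $q$ to $0$. By the identity~\eqref{eq:Limsup_limit}, already established, the objective in Problems~\eqref{S_min} and~\eqref{S_max} equals $\p_{x,y}(\tau(V)=\infty)$ for every $V\in\V$; hence it suffices to prove that $-B$ minimises, and $B$ maximises, the quantity $\p_{x,y}(\tau(V)=\infty)$ over $V\in\V$. As in the proof of Theorem~\ref{thm}, the symmetry of the problem reduces this to the case $(x,y)\in D$.

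First I would record the elementary observation that for any $[0,\infty]$-valued random variable $\tau$ one has $e^{-q\tau}\uparrow\I_{\{\tau<\infty\}}$ as $q\downarrow 0$ (with the convention $e^{-q\cdot\infty}=0$), so that, by monotone convergence, $\lim_{q\downarrow 0}\E_{x,y}(e^{-q\tau(V)})=\p_{x,y}(\tau(V)<\infty)$ for every $V\in\V$, and likewise for $V=-B$ and $V=B$.

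Next, fix $V\in\V$. Theorem~\ref{thm}, read in the form of Problem~\eqref{pm}, gives, for every $q>0$,
$$\E_{x,y}\big(e^{-q\tau(V)}\big)\le\E_{x,y}\big(e^{-q\tau(-B)}\big)\qquad\text{and}\qquad\E_{x,y}\big(e^{-q\tau(V)}\big)\ge\E_{x,y}\big(e^{-q\tau(B)}\big).$$
Letting $q\downarrow 0$ and applying the previous step on both sides of each inequality yields $\p_{x,y}(\tau(V)<\infty)\le\p_{x,y}(\tau(-B)<\infty)$ and $\p_{x,y}(\tau(V)<\infty)\ge\p_{x,y}(\tau(B)<\infty)$, i.e. $\p_{x,y}(\tau(V)=\infty)\ge\p_{x,y}(\tau(-B)=\infty)$ and $\p_{x,y}(\tau(V)=\infty)\le\p_{x,y}(\tau(B)=\infty)$ for all $V\in\V$. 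In view of~\eqref{eq:Limsup_limit} this is exactly the statement that $V^{\text{inf}}=-B$ solves~\eqref{S_min} and $V^{\text{sup}}=B$ solves~\eqref{S_max}.

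There is essentially no obstacle in this argument: the only points requiring care are the direction of the monotone limit (and the convention $e^{-q\cdot\infty}=0$, which ensures the limit correctly captures the mass of $\tau(V)$ at $+\infty$) and the trivial reduction via~\eqref{eq:Limsup_limit}. One could instead seek a self-contained proof paralleling Lemmas~\ref{bp}--\ref{ls} with $q=0$, the natural candidate value function being $\Psi(x,y)=\p_{x,y}(\tau(\mp B)=\infty)=(y/x)^{k}$ with $k=\max\{0,-2\mu/\s_\pm^2\}$; however this requires re-verifying boundedness, continuity up to the diagonal, the sign of $\Psi_{xy}$ and $\mathcal{L}^{(\pm)}\Psi=0$, as well as separately treating the degenerate case $\Psi\equiv 1$, so the limiting argument above is the more economical route.
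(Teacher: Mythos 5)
Your proof is correct, and it takes a genuinely different route from the paper. You deduce Proposition~\ref{pthm} directly from Theorem~\ref{thm}: since $e^{-q\tau}\uparrow\I_{\{\tau<\infty\}}$ as $q\downarrow 0$ (with $e^{-q\cdot\infty}:=0$), monotone (or bounded) convergence gives $\E_{x,y}(e^{-q\tau(V)})\to\p_{x,y}(\tau(V)<\infty)$, and passing the $q$-uniform inequalities of Problem~\eqref{pm} to the limit shows that $-B$ minimises and $B$ maximises $\p_{x,y}(\tau(V)=\infty)$, which by~\eqref{eq:Limsup_limit} is exactly the stationary criterion. The paper argues differently: it splits according to the sign of $\mu$. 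For $\mu>0$ it uses the representation~\eqref{eq:Key_Quocient} and the strong law of large numbers for Brownian motion to conclude $\p_{x,y}(\tau(V)=\infty)=0$ for \emph{every} $V\in\V$, so every coupling is optimal; for $\mu\le0$ it invokes the finite-horizon result, Theorem~\ref{neg}\eqref{item:b_cor}, which gives couplings minimising/maximising $\p(\tau(V)>t)$ simultaneously for all $t$, hence also the ergodic average (and the paper adds a remark to rule out circularity, since that theorem appears later). Your limiting argument stays entirely within Section~\ref{sec:Inf_Hor}, needs no forward reference and no case analysis, which is more economical; the paper's route, on the other hand, yields the stronger pointwise-in-$t$ optimality when $\mu\le0$ and makes explicit the degenerate nature of the case $\mu>0$ (where the stationary criterion does not distinguish couplings at all), information which the $q\downarrow0$ limit does not by itself reveal. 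One small remark: in your argument the symmetry reduction to $(x,y)\in D$ is not actually needed, since Theorem~\ref{thm} is stated for arbitrary starting points; keeping it is harmless.
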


\begin{proof}
As in 
Section~\ref{subsec:Proof_inf_hor}
we may assume that, due to symmetry, 
the starting points of 
$X$
and
$Y(V)$
satisfy
$(x,y)\in D$
(see~\eqref{eq:Def_D}).
By~\eqref{gbmx} %--\eqref{parameters}
and 
the definition of
$\tau(V)$
in Section~\ref{sec:Setting}
we have 
\begin{equation}
\label{eq:Key_Quocient}
%\frac{X_t}{Y_t(V)} = \frac{x}{y} \exp\left(\s_1 B_t-\s_2 V_t - \mu t \right)%\qquad t\geq0,
%\>\>\text{and}\>\>
\tau(V)= \inf\{ t \geq 0;\; \sigma_2 V_t-\sigma_1 B_t + \mu t = \log(x/y)\},
\end{equation}
where 
$\mu$
is defined in~\eqref{def:mu_sigma_pm}
and the convention 
$\inf\emptyset =\infty$
is used.
%By 
%the definition of
%$\tau(V)$
%in Section~\ref{sec:Setting}
%and equality~\eqref{eq:Key_Quocient}
%it follows that 
%$\tau(V)= \inf\{ t \geq 0;\; \sigma_2 V_t-\sigma_1 B_t + \mu t = \log(x/y)\}$
%(with $\inf\emptyset =\infty$).
If 
$x=y$
we have
$\tau(V)=0$
for all 
$V \in \V$ 
and Proposition~\ref{pthm}
follows. So we can assume 
$(x,y)\in D^\circ$ in the rest of the proof.

We first analyse the case 
$\mu>0$.
By~\eqref{eq:Limsup_limit}, 
Problems~\eqref{S_min} and~\eqref{S_max} are equivalent to finding $V^{(\pm)} \in \V$ such that
\begin{equation}
\label{ppm} \tag{S$\pm$}
\inf_{V \in \V} \pm \p(\tau(V) = \infty) = \pm \p(\tau(V^{(\pm)}) = \infty).
\end{equation}
The strong law of large numbers for Brownian motion 
(e.g.~\cite[p.~53]{Borodin}), representation~\eqref{eq:Key_Quocient}
and 
$\log(x/y)>0$
imply the equality
$\p_{x,y}(\tau(V) = \infty) = 0$ 
for every $V \in \V$ and Proposition~\ref{pthm} follows. 

In the case
$\mu\leq0$
we return to the formulation of 
Problems~\eqref{S_min} and~\eqref{S_max}
above. Observe that Theorem~\ref{neg}\eqref{item:b_cor} below
yields the optimal couplings that minimise and maximise the probability 
$\p(\tau(V) > t)$ for every 
$t\geq0$.
Since the couplings are independent of
$t$,
they also minimise and maximise the stationary criteria
in Problems~\eqref{S_min} and~\eqref{S_max},
which concludes the proof. 
\end{proof}

\begin{remark}
The proof of Proposition~\ref{pthm} 
relies in an obvious way on Theorem~\ref{neg}\eqref{item:b_cor} below. 
We would like to stress that there is no circularity 
in this argument since Proposition~\ref{pthm}
%the results of Section~\ref{subsec:Stationary}
is not used in Section~\ref{sed:Fin_Hor}.
Stationary problems are considered in Section~\ref{sec:Inf_Hor} 
rather than later on in the paper, because the structure of the solution is the same as that of 
the infinite time horizon problems.
\end{remark}

\section{Finite time horizon problems and the efficiency of the couplings}
\label{sed:Fin_Hor}

\subsection{Finite time horizon problems}
\label{subsec:Fin_Hor}
Retain the setting and notation from Section~\ref{sec:Setting}.
For any $T > 0$, consider the following problems:
\begin{equation}
\label{Tpm} \tag{T$\pm$}
\text{find
$V^{(\pm)} \in \V$
such that }
\inf_{V \in \V} \pm
\p \big( \tau(V) > T \big)
=\pm
\p \big( \tau \big( V^{(\pm)} \big) > T \big).
\end{equation}
%where
%$\pm$
%denotes
%either
%$+$
%or
%$-$.
%\begin{equation} \label{Tpm}
%\tag{InfT} \text{minimise} \quad \p(\tau(V) > T) \quad \text{over} \quad V \in \V
%\end{equation}
%\begin{equation} \label{gfsup} \tag{SupT}
%\text{maximise} \quad \p(\tau(V) > T) \quad \text{over} \quad V \in \V.
%\end{equation}
As in Section~\ref{sec:Inf_Hor},
we can reduce Problem~\eqref{Tpm}
to the case
where diffusions in~\eqref{gbmx} start at
$(x,y)\in D$,
where
$D$
is given in~\eqref{eq:Def_D}.
%\begin{equation}
%\label{eq:Def_E}
%$E:=\{(a,b,t)\>:\> a\geq b>0,\, t\geq 0\}$.
%\end{equation}
Define the set
$E:=D\times[0,T]$
and recall that the \emph{value function} for Problem~\eqref{Tpm}
%a function
%$F:E\to\R_+$
is defined by
%the following equality holds:
\begin{equation}
\label{eq:Def_Val_Fun}
F(x,y,t):=\inf_{V\in\V}\pm \p_{x,y} (\tau(V)>t),\qquad  (x,y,t)\in E.
\end{equation}

Based on the results in Section~\ref{sec:Inf_Hor},
one might expect that 
$\pm\Phi^{(\pm)}$,
where
\begin{align}
\label{eq:val_fun_pm_T}
\Phi^{(\pm)}(x,y,t) & := \p_{x,y} \left( \tau(\mp B)>t\right),
\qquad (x,y,t)\in E,
%\\ \psi^I(x,y) & = \E_{x,y} \left( \mathrm{e}^{-q\tau(B)} \right).
\end{align}
%and
%$\pm$
%and
%$\mp$
%simultaneously denote either
%$+$
%and
%$-$,
%or
%$-$
%and
%$+$,
would be the value function
for Problem~\eqref{Tpm}.
In order to investigate this,
we define the function
$\A^{(\pm)}f$
for any
$f\in\mathcal{C}^{2,2,1}(E^\circ)$
($E^\circ$
is the interior of
$E$
in
$\R^3$)
by the formula
\begin{equation*}
\label{eq:Def_A_pm}
\left(\mathcal{A}^{(\pm)}f\right)(x,y,t) :=
\left(a_1 xf_x + a_2 yf_y + \frac{1}{2}\s_1^2x^2f_{xx} +
 \frac{1}{2}\s_2^2y^2f_{yy} \mp \s_1\s_2xy f_{xy} - f_t\right)(x,y,t),
\end{equation*}
%\left(
%a_1 x\partial_{1}f + a_2 y\partial_{2}f + \frac{1}{2}\s_1^2x^2\partial_{1,1}f +
% \frac{1}{2}\s_2^2y^2\partial_{2,2}f \mp \s_1\s_2xy \partial_{1,2}f - qf\right)(x,y)
where
$(x,y,t)\in E^\circ$
%$\pm$ and $\mp$ denote either $+$ and $-$, or $-$ and $+$,
and
$f_x,f_y,f_t$,
etc.\ denote the partial derivatives of
$f$.
For any sufficiently smooth function
$\Phi:E\to\R_+$
and any Brownian motion
$V\in\V$,
we define
the local martingale
$N(V,\Phi)=(N_t(V,\Phi))_{t\in[0,T]}$
by
\begin{equation}
\label{eq:Def_MT}
N_t(V,\Phi) := \int_0^{t \wedge \tau(V)}
\big( \s_1 X_s \Phi_x(X_s, Y_s(V),T-s) \,\mathrm{d}B_s+ \s_2 Y_s(V)
\Phi_y(X_s, Y_s(V),T-s) \,\mathrm{d}V_s \big).
\end{equation}
The following proposition provides the key ingredient 
in the proof of Theorem~\ref{neg} below.

\begin{proposition}
\label{prop:Fin_Time}
Let a bounded function
$\Phi:E\to\R_+$
satisfy: 
(i)
$\Phi(x,x,t)=0$
for all
$x>0$ and
$t\in[0,T]$,
and
$\Phi(x,y,0)=1$
for all
$(x,y) \in D^\circ$;
(ii)
$\Phi\in\mathcal{C}^{2,2,1}(E^\circ)$
and, in the interior
$E^\circ$,
the equality
%$\Psi_{xy} \leq 0$ and
$\mathcal{A}^{(\pm)}\Phi = 0$
holds;
%where $\pm$ %and $\mp$
%denotes either $+$ or $-$; %, or $-$ and $+$;
(iii) $N(V,\Phi)$
is a $\p_{x,y}$-martingale
for all $(x,y)\in D$
and $V \in \V$.
Then the following equivalence holds:
%($\pm$ and $\mp$ denote either $+$ and $-$, or $-$ and $+$):
$$
\text{$\Phi_{xy} \geq 0$ on
$E^\circ$}
\iff
\text{$V^{(\pm)}=\mp B$  solves Problem~\eqref{Tpm}
and
$\pm\Phi$ is its value function}.
$$
\end{proposition}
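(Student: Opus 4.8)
The plan is to prove the equivalence in two directions, using Itô's formula applied to the process $t\mapsto \Phi(X_{t\wedge\tau(V)},Y_{t\wedge\tau(V)}(V),T-t)$ in the same spirit as the proofs of Lemmas~\ref{bp}, \ref{gen} and~\ref{ls}. First I would set up the analogue of the process $U$: for $\Phi$ satisfying (i)--(iii) and any $V\in\V$, write $W_t(V,\Phi):=\Phi(X_{t\wedge\tau(V)},Y_{t\wedge\tau(V)}(V),T-(t\wedge\tau(V)))$. Using Lemma~\ref{bm} from the Appendix, $\mathrm{d}[X,Y(V)]_t=C_t\s_1X_t\s_2Y_t(V)\,\mathrm{d}t$ with $C$ adapted and $C_t\in[-1,1]$ a.s., and an Itô expansion together with $\mathcal{A}^{(\pm)}\Phi=0$ gives
$$
\pm W_t(V,\Phi)=\pm\Phi(x,y,T)\pm N_t(V,\Phi)\mp\int_0^{t\wedge\tau(V)}\s_1\s_2(1\pm C_s)X_sY_s(V)\,\Phi_{xy}(X_s,Y_s(V),T-s)\,\mathrm{d}s,
$$
where I have used that the $f_t$ term picks up a sign from differentiating $T-s$ and the cross term is $\mp\s_1\s_2 xy f_{xy}$ exactly as in~\eqref{eq:Def_A_pm}. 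Since $\s_1\s_2>0$, $X,Y(V)\geq0$ and $1\pm C_s\geq0$, the drift of $\pm W(V,\Phi)$ has the sign of $\mp\Phi_{xy}$.

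For the ($\Rightarrow$) direction, assume $\Phi_{xy}\geq0$ on $E^\circ$. Then for the $(+)$ case the drift of $+W(V,\Phi)$ is $\leq0$, so $+W(V,\Phi)$ is a $\p_{x,y}$-supermartingale for every $V$ (using (iii) to kill the local-martingale part), and for $V=-B$ one has $1+C_s\equiv0$, so the drift vanishes and $W(-B,\Phi)$ is a true martingale; the $(-)$ case is symmetric with $V=B$. Now the boundary conditions in (i) and boundedness let me run exactly the argument of Lemma~\ref{bp}: evaluating the supermartingale at time $\tau(V)$ and using $\Phi(x,x,\cdot)=0$, $\Phi(\cdot,\cdot,0)=1$ together with the Dominated Convergence Theorem yields $\pm\p_{x,y}(\tau(V)>T)=\E_{x,y}(\pm W_{\tau(V)\vee(\text{something})})\le \pm\Phi(x,y,T)$; more precisely one writes $\p_{x,y}(\tau(V)>T)=\E_{x,y}(\Phi(X_{\tau(V)\wedge T},Y_{\tau(V)\wedge T}(V),T-\tau(V)\wedge T))$ because $\Phi$ equals the indicator $\{\tau>T\}$ on the relevant boundary, and the (super)martingale property gives the inequality with equality for $V=\mp B$. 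Hence $\inf_V\pm\p_{x,y}(\tau(V)>T)=\pm\Phi(x,y,T)$, i.e.\ $V^{(\pm)}=\mp B$ is optimal and $\pm\Phi$ is the value function.

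For the ($\Leftarrow$) direction, suppose $\Phi_{xy}(x_0,y_0,t_0)<0$ at some interior point; I must produce a Brownian motion $V$ beating $\mp B$. The idea is the standard ``switching'' construction: by continuity $\Phi_{xy}<0$ on a neighbourhood $G\subset E^\circ$ of $(x_0,y_0,t_0)$, and one chooses $V$ so that $\mathrm{d}[B,V]_s=-C_s\,\mathrm{d}s$ with $C_s$ as close to $-1$ as the constraint allows outside $G$ (so $V$ agrees with the mirror coupling there, keeping the drift of $+W$ nonpositive) but with $1+C_s$ strictly positive on $G$ — for instance $C_s=+1$, i.e.\ the synchronous coupling — whenever $(X_s,Y_s(V),T-s)\in G$. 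Because $\p_{x,y}$ assigns positive probability to the event that the pair $(X,Y(V))$ spends positive time in $G$ before coupling (the process started near $(x_0,y_0)$, or reaching a neighbourhood of it, is nondegenerate), the drift term contributes a strictly negative amount in expectation, so $\E_{x,y}(+W_T(V,\Phi))<\Phi(x,y,T)$ for suitable starting points, whence $\p_{x,y}(\tau(V)>T)<\Phi(x,y,T)=\p_{x,y}(\tau(\mp B)>T)$, contradicting optimality of $\mp B$; the $(-)$ case is again symmetric.

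The main obstacle is the ($\Leftarrow$) direction: one has to (a) legitimately construct an admissible $V\in\V$ realising the desired covariation profile $\mathrm{d}[B,V]_s=-C_s\,\mathrm{d}s$ with $C$ a prescribed adapted functional of the current state — this is where a result like Lemma~\ref{bm} (run in reverse, together with an auxiliary Brownian motion on the enlarged space) is needed to guarantee existence of such a $V$ — and (b) argue that the bad region $G$ is actually visited with positive probability before the coupling time, for a suitable (possibly shifted) choice of starting point $(x,y)\in D^\circ$, so that the strict inequality genuinely propagates to the value function. Points (i) and boundedness, plus the martingale assumption (iii), make all the integrability/limit steps routine, so the real content is this strict-suboptimality perturbation argument. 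I would also remark that $\Phi_{xy}$ having a sign on all of $E^\circ$ is what makes the dichotomy clean: if $\Phi_{xy}$ changed sign there would be no single coupling realising the infimum, which is consistent with the suboptimality phenomenon announced in the introduction.
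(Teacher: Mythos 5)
Your strategy is the same as the paper's (It\^o decomposition of $\Phi$ along the stopped controlled pair for ($\Rightarrow$), a switching coupling for ($\Leftarrow$)), but as written the argument has one error and one unresolved part. The error is the sign in your central identity: with $\mathcal{A}^{(\pm)}\Phi=0$ and $\mathrm{d}[X,Y(V)]_t=C_t\,\s_1X_t\,\s_2Y_t(V)\,\mathrm{d}t$, the drift of $W(V,\Phi)$ is $\s_1\s_2(C_s\pm1)X_sY_s\Phi_{xy}$, so
\begin{equation*}
\pm W_t(V,\Phi)=\pm\Phi(x,y,T)\pm N_t(V,\Phi)
+\int_0^{t\wedge\tau(V)}\s_1\s_2(1\pm C_s)X_sY_s(V)\,\Phi_{xy}\bigl(X_s,Y_s(V),T-s\bigr)\,\mathrm{d}s,
\end{equation*}
with a \emph{plus} sign in front of the integral (compare the displayed identity in the proof of Lemma~\ref{gen}). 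Hence, when $\Phi_{xy}\geq0$, the process $\pm W(V,\Phi)$ is a \emph{sub}martingale, not a supermartingale. This is not cosmetic: a supermartingale would give $\pm\p_{x,y}(\tau(V)>T)=\E_{x,y}(\pm W_T)\leq\pm\Phi(x,y,T)$, i.e.\ it would identify $\mp B$ as the \emph{maximiser} of $\pm\p(\tau(V)>T)$, the opposite of Problem~\eqref{Tpm}, so your stated conclusion $\inf_{V}\pm\p_{x,y}(\tau(V)>T)=\pm\Phi(x,y,T)$ does not follow from what you wrote. With the corrected sign, the submartingale inequality $\E_{x,y}(\pm W_T)\geq\pm W_0$, equality for $V=\mp B$ (where $1\pm C\equiv0$), and the boundary identification $\E_{x,y}(W_T)=\p_{x,y}(\tau(V)>T)$ give exactly the ($\Rightarrow$) direction of the paper.

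For ($\Leftarrow$) you correctly isolate the two substantive points but leave them open, whereas they are precisely what has to be proved. (a)~Constructing the admissible coupling is in fact easy and needs neither an auxiliary Brownian motion nor ``Lemma~\ref{bm} in reverse'': since the desired correlation only takes the values $\pm1$, one sets
$\tilde V^{(\pm)}_t=\int_0^t\bigl(\mp\I_{\{s<\tau_1^{(\pm)}\}}\pm\I_{\{\tau_1^{(\pm)}\leq s<\tau_2^{(\pm)}\}}\mp\I_{\{s\geq\tau_2^{(\pm)}\}}\bigr)\,\mathrm{d}B_s$,
a Brownian motion by L\'evy's characterisation, where $\tau_1^{(\pm)}$ is the entry time of the $\mp B$-coupled pair into a small box around $(x_0,y_0)$ and $\tau_2^{(\pm)}$ the exit time from a larger box under the switched dynamics. (b)~Because $\Phi_{xy}$ is evaluated at $(X_s,Y_s,T-s)$, ``visiting the bad region'' means being near $(x_0,y_0)$ in the right time window; the paper handles this by restricting to the event $\{\tau_1^{(\pm)}\in(T_0-r,T_0+r),\,\tau(\tilde V^{(\pm)})>T_0+2r\}$, showing it has positive probability, and then deriving the contradiction via optional sampling at $\tau_1^{(\pm)}\leq\tau_2^{(\pm)}$ applied to the submartingale granted by Bellman's principle for the assumed value function $\pm\Phi$. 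Your variant --- comparing $\E_{x,y}(W_T(\tilde V^{(\pm)},\Phi))$ directly with $\Phi(x,y,T)=\p_{x,y}(\tau(\mp B)>T)$ to contradict optimality of $\mp B$ --- is a legitimate alternative and even bypasses the appeal to Bellman's principle, but in your write-up these steps are announced rather than carried out, so the ($\Leftarrow$) direction is incomplete as it stands.
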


%\begin{enumerate}[(a)]
%\item If
%$\Phi_{xy} \geq 0$ on
%$E^\circ$,
%then
%$V^{(\pm)}=\mp B$
%solves
%Problem~\eqref{Tpm}
%extremal Brownian motion
%(as usual, $\pm$ and $\mp$
%denote either $+$ and $-$, or $-$ and $+$).
%\label{item:a}
%\item If there exists a point
%$(x_0, y_0, T_0)\in E^\circ$, such that
%$\Phi_{xy}(x_0, y_0, T_0) < 0$,
%then $\Phi$ is not equal to the left-hand side of~\eqref{Tpm}
%(i.e. $\Phi$
%is not the value
%function for Problem~\eqref{Tpm})
%and $\mp B$ is not the optimal
%coupling.
%\label{item:b}
%\end{enumerate}

\begin{proof}
\noindent ($\Rightarrow$): The proof of this implication is analogous to that
of Lemmas~\ref{bp} (Bellman's principle) 
and~\ref{gen} (submartingale property) in Section~\ref{sec:Inf_Hor}.
The process
$\pm U(V,\Phi)=(\pm U_t(V,\Phi))_{t\in[0,T]}$,
\begin{equation}
\label{eq:Sub_mart_fin_T}
U_t(V,\Phi):=\Phi(X_{t\wedge \tau(V)},Y_{t\wedge \tau(V)}(V),T-t),
\end{equation}
is a $\p_{x,y}$-submartingale
for any
$V\in\V$
and
$(x,y)\in D$ (proof as in Lemma~\ref{gen}).
For any 
$t\in[0,T]$,
the boundary conditions in assumption~(i) imply 
$$
U_t(V,\Phi) = U_{\tau(V)}(V,\Phi)=0\quad \text{$\p_{x,y}$-a.s. on $\{t\geq\tau(V)\}$.} 
$$
Hence, for any $(x,y) \in D$ and
$V \in \V$,
the submartingale property yields the inequality
\begin{eqnarray*}
\pm \p_{x,y} \left(\tau(V)>T \right) & = &
\E_{x,y} \left(\pm U_T(V,\Phi)\I_{\{\tau(V)>T\}} \right) =
\E_{x,y} \left(\pm U_T(V,\Phi)\right) \\
& \geq &  \pm \E_{x,y} U_0(V,\Phi)=\pm\Phi(x,y,T).
\end{eqnarray*}
As in Lemma~\ref{bp}, this establishes the implication
(note that, unlike Lemma~\ref{bp}, in this case we do not need, and in fact do not have,
the continuity of $\Phi$ on $E$).

\noindent ($\Leftarrow$):
Assume that there exists $(x_0, y_0, T_0)\in E^\circ$,
such that
$\Phi_{xy}(x_0, y_0, T_0) < 0$,
and that
$\pm\Phi$
is the value function of
Problem~\eqref{Tpm}. %the proposition follows.
Bellman's principle implies that the process
$\pm U(V,\Phi)$, %=(\pm U_t(V,\Phi))_{t\in[0,T]}$,
defined in~\eqref{eq:Sub_mart_fin_T},
%$$U_t(V,\Phi):=\Phi(X_{t\wedge \tau(V)},Y_{t\wedge \tau(V)}(V),T-t),$$
is a $\p_{x,y}$-submartingale
for any
$V\in\V$
and
$(x,y)\in D$.
Using our assumption, we now construct a Brownian motion
$\tilde{V}^{(\pm)}\in\V$, such that
$\pm U(\tilde{V}^{(\pm)},\Phi)$
fails to be a
$\p_{x,y}$-submartingale (for any pair
$(x,y)\in D^\circ$), which will imply the proposition.

The continuity of
$\Phi_{xy}$
implies that there exists
$r>0$,
such that
$\Phi_{xy}$
is strictly negative
on the set
$K_2 := H_2 \times [T_0-2r,T_0+2r]\subset E^\circ$,
where
$H_2:=[x_0- 2r, x_0+ 2r]\times [y_0- 2r, y_0+ 2r]$.
Let
%$K_1 := H_1 \times [T_0-r,T_0+r]$,
%where
$H_1:=[x_0- r, x_0+ r]\times [y_0- r, y_0+ r]$
and define the stopping times
$\tau_1^{(\pm)}$
and
$\tau_2^{(\pm)}$
%(with convention $\inf \emptyset = T$)
by:
\begin{equation*}
\tau_1^{(\pm)} := \inf\{ t \in[0,T]; (X_t, Y_t(\mp B)) \in H_1 \},\quad
%\text{and}
%\quad
\tau_2^{(\pm)}  := \inf\{ t \in [\tau_1,T]; (X_{t}, Y_{t}(\pm B)) \notin H_2 \}
%\quad
%(\inf \emptyset := T).
\end{equation*}
(where $\inf \emptyset := T$).
Note that 
$\tau_1^{(\pm)} \leq \tau_2^{(\pm)} \leq T$
$\p_{x,y}$-a.s.
and
$\p_{x,y}(\tau_1^{(\pm)} <\tau_2^{(\pm)})>0$
(there is a slight abuse of notation in the definition of
$\tau_2^{(\pm)}$
as it is assumed that the process
$Y(\pm B)$,
defined in~\eqref{gbmx},
is driven by the Brownian motion
$\pm B$
as indicated, but started at the random time
$\tau_1^{(\pm)}$
and point
$Y_{\tau_1^{(\pm)}}(\mp B)$;
ditto for
$X$).

Define the process
$\tilde{V}^{(\pm)}=(\tilde{V}^{(\pm)}_t)_{t\in[0,\infty)}$ by the following formula:
$$\tilde{V}^{(\pm)}_t:=\int_0^t  \left(\mp \I_{\{s<\tau_1^{(\pm)}\}}
\pm\I_{\{ \tau_1^{(\pm)} \leq s < \tau_2^{(\pm)} \}}
\mp\I_ {\{ s \geq \tau_2^{(\pm)} \}} \right)\,
\mathrm{d}B_s,$$
where
$\I_{\{\cdot\}}$
is the indicator of the event 
$\{\cdot\}$.
%and 
%$\pm$ and $\mp$
%denote either $+$ and $-$, or $-$ and $+$.
Note that
$\tilde{V}^{(\pm)}$
is an
$(\F_t)$-Brownian motion by L\'evy's characterisation theorem.
It\^o's formula on the stochastic interval
$[\tau_1^{(\pm)},\tau_2^{(\pm)}]$
and assumptions~(i)--(iii) in the proposition imply the
following representation:
%of the submartingale
%$\pm U(V^*,\Phi)$ at the stopping time
%$\tau_2$:
\begin{align*}
& \E_{x,y}\left[\pm U_{\tau_2^{(\pm)}}(\tilde{V}^{(\pm)},\Phi)\big\vert \F_{\tau_1^{(\pm)}} \right] \\
& = \pm U_{\tau_1^{(\pm)}}(\tilde{V}^{(\pm)},\Phi)
+\E_{x,y}\left[\int_{\tau_1^{(\pm)}}^{\tau_2^{(\pm)}}
2 \sigma_1\sigma_2 X_s Y_s(\tilde{V}^{(\pm)})\Phi_{xy}(X_s,Y_s(\tilde{V}^{(\pm)}),T-s)\,
\mathrm{d}s\Big\vert \F_{\tau_1^{(\pm)}} \right].
\end{align*}
The event
$\{\tau_1^{(\pm)}\in(T_0-r,T_0+r),\tau(\tilde{V}^{(\pm)})>T_0+2r\}$
has strictly positive probability and the integrand under the conditional
expectation is strictly negative on this event. We therefore find
$$
\E_{x,y}\left[\pm U_{\tau_2^{(\pm)}}(\tilde{V}^{(\pm)},\Phi)\big\vert \F_{\tau_1^{(\pm)}} \right]
<
\pm U_{\tau_1^{(\pm)}}(\tilde{V}^{(\pm)},\Phi)
\quad \text{on
$\{\tau_1^{(\pm)}\in(T_0-r,T_0+r),\tau(\tilde{V}^{(\pm)})>T_0+2r\}$}
$$
$\p_{x,y}$-a.s.
This inequality contradicts
the $\p_{x,y}$-a.s.\ inequality
$$
\E_{x,y}\left[\pm U_{\tau_2^{(\pm)}}(\tilde{V}^{(\pm)},\Phi)\big\vert \F_{\tau_1^{(\pm)}} \right]
\geq
\pm U_{\tau_1^{(\pm)}}(\tilde{V}^{(\pm)},\Phi),$$
which  follows from the optional sampling theorem applied to
the bounded $\p_{x,y}$-submartingale
$U(\tilde{V}^{(\pm)},\Phi)$.
This concludes the proof.
\end{proof}

We will now apply Proposition~\ref{prop:Fin_Time}
to study the question of whether
$\pm\Phi^{(\pm)}$,
defined in~\eqref{eq:val_fun_pm_T},
is the value function for Problem~\eqref{Tpm}.
%Define
%$$\mu := a_2 - a_1 + \s_1^2/2 - \s_2^2/2\quad\text{ and }\quad\sigma_\pm:=\s_2\pm\s_1,$$
%where the volatility and drift parameters of
%$X$
%and
%$Y(V)$
%satisfy~\eqref{parameters}.

\begin{lemma}
\label{der}
Recall that 
$\mu$
and
$\s_\pm$
are given in~\eqref{def:mu_sigma_pm}
and assume
$\s_\pm\neq0$.
Then, assumptions (i)--(iii) of Proposition~\ref{prop:Fin_Time}
hold for the function
$\Phi^{(\pm)}$
defined in~\eqref{eq:val_fun_pm_T}.
Furthermore, we have
\begin{equation*}
\Phi^{(\pm)}_{xy}(x,y,t) = \frac{2\log \left(x/y\right) - 4\mu t}{xy(|\s_\pm| \sqrt{t})^3}
\,n\left( \frac{\log \left(x/y\right) - \mu t}{|\s_\pm| \sqrt{t}} \right) +
\frac{4\mu^2}{xy\s_\pm^4} \left(\frac{x}{y}\right)^{ 2\mu/\s_\pm^2 }
N\left( \frac{-\log\left(x/y\right) - \mu t}{|\s_\pm| \sqrt{t}} \right)
\end{equation*}
for all $(x,y)\in D^\circ$ and $t > 0$,
where
$N(\cdot)$
is the standard normal distribution function and
$n(\cdot)$
is its density.
\end{lemma}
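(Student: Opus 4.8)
The plan is to exploit the explicit formula for $\Phi^{(\pm)}$ coming from the first-passage density of Brownian motion with drift, and then verify each of the three assumptions of Proposition~\ref{prop:Fin_Time} together with the stated second-order mixed derivative by direct differentiation. Recall from the closing computation in Section~\ref{sec:Setting} that $\tau(\mp B)$ has the same law (under $\p_{x,y}$) as the first-passage time of the drifted Brownian motion $(\s_\pm B_t + \mu t)_{t\ge 0}$ over the level $\log(x/y)>0$ (since $(x,y)\in D^\circ$). Writing $b:=\log(x/y)$, the classical reflection-principle formula (e.g.~\cite[p.\ 295]{Borodin}) gives, for $\s_\pm\neq 0$,
$$
\Phi^{(\pm)}(x,y,t)=N\!\left(\frac{b-\mu t}{|\s_\pm|\sqrt t}\right)-\left(\frac{x}{y}\right)^{2\mu/\s_\pm^2}N\!\left(\frac{-b-\mu t}{|\s_\pm|\sqrt t}\right),
$$
valid on $D^\circ\times(0,T]$; this will be the single formula from which everything else is read off.

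First I would check assumption~(i): as $x\to y$ we have $b\to 0^+$ so both normal arguments tend to $-\mu t/(|\s_\pm|\sqrt t)$ and the two terms cancel, giving $\Phi^{(\pm)}(x,x,t)=0$; and as $t\to 0^+$, since $b>0$, the first argument tends to $+\infty$ and the second to $-\infty$, so $\Phi^{(\pm)}(x,y,0^+)=1$ for $(x,y)\in D^\circ$. For assumption~(ii), smoothness on $E^\circ$ is clear from the formula, and the identity $\mathcal A^{(\pm)}\Phi^{(\pm)}=0$ is (a version of) the backward Kolmogorov/heat equation for the first-passage probability; rather than grinding through the chain rule one can note that $(x,y,t)\mapsto \Phi^{(\pm)}(x,y,t)=\p_{x,y}(\tau(\mp B)>t)$ solves the parabolic PDE associated to the generator of $(X,Y(\mp B))$ with the minus sign on the cross term (the sign depending on the quadratic covariation $\mathrm d[X,Y(\mp B)]=\mp\s_1\s_2 XY\,\mathrm dt$), exactly as encoded in $\mathcal A^{(\pm)}$; alternatively one substitutes the displayed formula and verifies the cancellation, which is a finite but routine computation using $n'(z)=-z\,n(z)$. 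For assumption~(iii), I would bound the two first-order partial derivatives $\Phi^{(\pm)}_x,\Phi^{(\pm)}_y$ on compact subsets of $D$ uniformly in $t\in[0,T]$: away from $t=0$ these are manifestly continuous, and near $t=0$ the Gaussian factor $n(\,\cdot\,/(|\s_\pm|\sqrt t))$ decays faster than any power of $\sqrt t$ blows up (since $b$ stays bounded away from $0$ on compacts of $D^\circ$, while on the diagonal the formula is identically $0$), so $\s_1 X_s\Phi^{(\pm)}_x$ and $\s_2 Y_s(V)\Phi^{(\pm)}_y$ evaluated along the paths are bounded on $[0,t\wedge\tau(V)]$, making the stochastic integrals $N(V,\Phi^{(\pm)})$ square-integrable martingales.

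Finally I would establish the stated formula for $\Phi^{(\pm)}_{xy}$ by differentiating the displayed expression twice, once in $x$ and once in $y$, using $\partial_x b=1/x$, $\partial_y b=-1/y$, $n'(z)=-z\,n(z)$, and grouping terms; the power term $(x/y)^{2\mu/\s_\pm^2}$ produces the factor $4\mu^2/(xy\s_\pm^4)$ multiplying $N(\cdot)$, while the derivatives hitting the Gaussian densities collapse (after the algebraic simplifications that are forced by $n'(z)=-z n(z)$ and the specific combination of the two $N$-terms) to the single term $\bigl(2\log(x/y)-4\mu t\bigr)/\bigl(xy(|\s_\pm|\sqrt t)^3\bigr)\,n\bigl((\log(x/y)-\mu t)/(|\s_\pm|\sqrt t)\bigr)$. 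The main obstacle is purely computational: making sure that the many cross-terms arising from differentiating the product $(x/y)^{2\mu/\s_\pm^2}N(\cdot)$ and from the $x$- and $y$-derivatives of the first $N(\cdot)$ term combine exactly into the compact two-term answer, with no residual $n(\cdot)$ contribution from the second $N$-term; the key simplification is that the argument $(-b-\mu t)/(|\s_\pm|\sqrt t)$ of the second normal differs from that of the first only in the sign of $b$, so the Gaussian pieces generated when differentiating it partially cancel against those from the first term and against the $x,y$-derivatives of the prefactor $(x/y)^{2\mu/\s_\pm^2}$, leaving only the advertised expression. There are no conceptual difficulties beyond this bookkeeping, and the case $\s_\pm=0$ (where $k_\pm=q/\mu$ and a different first-passage formula applies) is explicitly excluded from the hypothesis.
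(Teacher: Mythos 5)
Your route is the same as the paper's: the reflection-principle formula for the running maximum of a drifted Brownian motion gives the representation $\Phi^{(\pm)}(x,y,t)=h^{(\pm)}(\log(x/y),t)$ with $h^{(\pm)}(z,s)=N\bigl((z-\mu s)/(|\s_\pm|\sqrt{s})\bigr)-\mathrm{e}^{2\mu z/\s_\pm^2}N\bigl((-z-\mu s)/(|\s_\pm|\sqrt{s})\bigr)$, and the paper likewise verifies (i), (ii) and the displayed formula for $\Phi^{(\pm)}_{xy}$ by direct differentiation; it merely organises the bookkeeping through the one-variable function $h^{(\pm)}$, computing $h^{(\pm)}_z$, $h^{(\pm)}_{zz}$, $h^{(\pm)}_s$ once and using $\Phi^{(\pm)}_{xy}=-h^{(\pm)}_{zz}/(xy)$, $\Phi^{(\pm)}_x=h^{(\pm)}_z/x$, $\Phi^{(\pm)}_y=-h^{(\pm)}_z/y$. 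The cancellations you anticipate are exactly those produced by the identity $\mathrm{e}^{2\mu z/\s_\pm^2}\,n\bigl((-z-\mu s)/(|\s_\pm|\sqrt{s})\bigr)=n\bigl((z-\mu s)/(|\s_\pm|\sqrt{s})\bigr)$, so (i), (ii) and the $\Phi^{(\pm)}_{xy}$ formula are fine modulo carrying out the (genuinely routine) computation you only sketch.

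The genuine gap is in your verification of (iii). You bound $\Phi^{(\pm)}_x,\Phi^{(\pm)}_y$ on compact subsets of $D^\circ$ on which $b=\log(x/y)$ stays away from $0$, and conclude that the integrands of $N(V,\Phi^{(\pm)})$ in \eqref{eq:Def_MT} are bounded along the stopped paths. But the stopped path $(X_s,Y_s(V))_{s\le t\wedge\tau(V)}$ does not remain in such a set: $X$ and $Y(V)$ are unbounded, and, more importantly, the path approaches the diagonal precisely at the coupling time, possibly while the time-to-go $T-s$ is simultaneously small; your parenthetical ``on the diagonal the formula is identically $0$'' concerns $\Phi^{(\pm)}$, not its gradient. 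In fact $x\Phi^{(\pm)}_x(x,y,u)=h^{(\pm)}_z(\log(x/y),u)$ is the density at level $\log(x/y)$ of the running maximum over time $u$, which for $\log(x/y)$ of order $\sqrt{u}$ is of order $u^{-1/2}$, so no bound on compacts away from the diagonal controls the integrand near coupling. The missing step is the reduction the paper makes: only the log-ratio enters, since $x\Phi^{(\pm)}_x(x,y,t)=h^{(\pm)}_z(\log(x/y),t)$ and $y\Phi^{(\pm)}_y(x,y,t)=-h^{(\pm)}_z(\log(x/y),t)$, and the paper then concludes via It\^o's isometry from boundedness of these functions. Even with that reduction a word is needed about the corner $z\to0$, $s\to0$, where $h^{(\pm)}_z(z,s)\le C(1+s^{-1/2})$ but is not uniformly bounded; this gives a square-integrable martingale on $[0,t]$ for every $t<T$, which suffices for the use made of assumption (iii) in Proposition~\ref{prop:Fin_Time} after letting $t\uparrow T$ and using that $U(V,\Phi^{(\pm)})$ is bounded. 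As written, however, your compactness argument does not establish (iii).
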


\begin{proof}
The explicit formula for the distribution of the running maximum of a Brownian
motion with drift (see e.g.~\cite[p.~250]{Borodin})
yields the following representation of the function in~\eqref{eq:val_fun_pm_T}:
\begin{equation}
\label{eq:Phi_Rep}
\Phi^{(\pm)}(x,y,t)= h^{(\pm)} \left( \log \left(x/y %\frac{x}{y} 
\right) ,t \right)
\quad\text{for}\quad (x,y)\in D,
\end{equation}
where,
for any
$ z\geq0$ and $s>0$,
we define
\begin{eqnarray}
\label{eq:h_rep}
h^{(\pm)}(z,s) := 
N \left( \frac{z - \mu s}{|\s_\pm| \sqrt{s}} \right) -
\exp \left( \frac{2\mu z}{\s_\pm^2} \right)
N \left( \frac{-z - \mu s}{|\s_\pm| \sqrt{s}} \right).
\end{eqnarray}
Simple (but tedious) calculations using this representation
yield the properties required in assumptions~(i)--(iii)
of Proposition~\ref{prop:Fin_Time}. Indeed,
note that the partial derivatives 
$h^{(\pm)}_z$,
$h^{(\pm)}_{zz}$
and
$h^{(\pm)}_s$,
take the form (recall $n'(x)=-xn(x)$):
\begin{eqnarray*}
h^{(\pm)}_z(z,s) & = &\frac{2}{|\sigma_\pm|\sqrt{s}}\> n \left( \frac{z - \mu s}{|\s_\pm| \sqrt{s}} \right) 
- \frac{2\mu}{\sigma_\pm^2} \exp\left( \frac{2\mu z}{\s_\pm^2} \right) N \left( \frac{-z - \mu s}{|\s_\pm| \sqrt{s}} \right);\\
h^{(\pm)}_{zz}(z,s) & = &\frac{4s\mu-2z}{(|\sigma_\pm|\sqrt{s})^3}\> n \left( \frac{z - \mu s}{|\s_\pm| \sqrt{s}} \right) 
- \frac{4\mu^2}{\sigma_\pm^4} \exp\left( \frac{2\mu z}{\s_\pm^2} \right) N \left( \frac{-z - \mu s}{|\s_\pm| \sqrt{s}} \right);\\
h^{(\pm)}_s(z,s) & = &- \frac{z}{|\sigma_\pm|s^{3/2}}\> n \left( \frac{z - \mu s}{|\s_\pm| \sqrt{s}} \right). 
\end{eqnarray*}
These formulae and the representation in~\eqref{eq:Phi_Rep}
imply the formula for 
$\Phi^{(\pm)}_{xy}(x,y,t)$,
%in the proposition, 
as well as assumptions (i) and (ii) of Proposition~\ref{prop:Fin_Time}.
The martingale property of the process in~\eqref{eq:Def_MT} 
(i.e. assumption  (iii) in Proposition~\ref{prop:Fin_Time})
follows by It\^o's isometry from the fact that both functions
$$x\Phi^{(\pm)}_x(x,y,t)=h^{(\pm)}_z(\log(x/y),t)
\quad\text{and}\quad
y\Phi^{(\pm)}_y(x,y,t)=-h^{(\pm)}_z(\log(x/y),t)$$
are bounded on $E$. This completes the proof of the lemma.
\end{proof}

We are now ready to prove that
the mirror
(resp.\ synchronous)
coupling of the driving Brownian motions
in~\eqref{gbmx}
is not
necessarily optimal in
Problem~(T$+$) (resp.\ (T$-$)).
In Theorem~\ref{neg},
we give a necessary and sufficient condition
for the function
$\pm\Phi^{(\pm)}$,
defined~\eqref{eq:val_fun_pm_T},
to be the value function for Problem~\eqref{Tpm}.

%It is now clear that $\partial_{1,2}\phi^I(x,y,t) \geq 0$ if $\mu \leq 0$. So if
%dynamics parameters are such that $\mu = a_2 - a_1 + \frac{1}{2}(\s_1^2 -
%\s_2^2) \leq 0$, then by Propostion~\ref{prop:Fin_Time}
%Brownin motion $\mp B$ solves Problem~\eqref{Tpm}. However,
%the next lemma shows that $\partial_{1,2}\phi^I$ can be
%negative, so by Propostion~\ref{prop:Fin_Time} Brownian motion $\pm B$
%is not always optimal
%for Problem~\eqref{Tpm}.

\begin{thm}
\label{neg}
%Let
%$\pm$ and $\mp$
%denote either $+$ and $-$, or $-$ and $+$,
%Assume that 
%$\sigma_\pm$ 
Recall that 
$\mu$
and
$\sigma_\pm$
are given in~\eqref{def:mu_sigma_pm}.
Then the following holds for any positive time horizon
%$T>0$
and distinct starting points: 
%$(x,y)\in D^\circ$
%(see~\eqref{eq:Def_D} for the definition of 
%$D$):
%of the processes 
%$X$
%and
%$Y(V)$
%in~\eqref{gbmx}.
\begin{enumerate}[(a)]
\item 
%Let
%$\s_\pm=3$
%and
%$a_1 = a_2 = 0$.
%(i.e.\ $\mu = \s = 4$)
%Then
%$\Phi^{(\pm)}_{xy}(\mathrm{e}^2, \mathrm{e}, 1) < 0$
%and
If 
$\mu>0$
and
$\sigma_\pm\neq0$,
then 
$V^{(\pm)}=\mp B$  does NOT solve Problem~\eqref{Tpm}.
%$(x,y)\in D^\circ$.
\label{item:a_cor}
\item If
$\mu\leq0$,
%and
%$\s_\pm\neq0$,
then %the Brownian motion
$V^{(\pm)}=\mp B$ solves
Problem~\eqref{Tpm}
with the value function 
$\pm\Phi^{(\pm)}$
in~\eqref{eq:val_fun_pm_T}.
\label{item:b_cor}
\end{enumerate}
\end{thm}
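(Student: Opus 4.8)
The strategy is to combine Lemma~\ref{der} with the equivalence in Proposition~\ref{prop:Fin_Time}, so that the entire theorem reduces to a sign analysis of the closed-form expression for $\Phi^{(\pm)}_{xy}$. Write $z := \log(x/y) > 0$ (recall the starting points are distinct and, by the reduction to $D$, we may take $x>y$) and $s := t > 0$. By Lemma~\ref{der}, assumptions (i)--(iii) of Proposition~\ref{prop:Fin_Time} hold for $\Phi^{(\pm)}$ whenever $\s_\pm\neq 0$, and
\begin{equation*}
\Phi^{(\pm)}_{xy}(x,y,t) \;=\; \frac{1}{xy}\Bigl[\frac{2z - 4\mu s}{(|\s_\pm|\sqrt{s})^3}\,n\!\left(\frac{z-\mu s}{|\s_\pm|\sqrt{s}}\right) + \frac{4\mu^2}{\s_\pm^4}\,\mathrm{e}^{2\mu z/\s_\pm^2}\,N\!\left(\frac{-z-\mu s}{|\s_\pm|\sqrt{s}}\right)\Bigr].
\end{equation*}
Thus $\pm\Phi^{(\pm)}$ is the value function (and $V^{(\pm)}=\mp B$ is optimal) if and only if the bracketed quantity is $\geq 0$ for all $z>0$, $s>0$; otherwise, by the ($\Leftarrow$) direction of Proposition~\ref{prop:Fin_Time}, $V^{(\pm)}=\mp B$ does not solve Problem~\eqref{Tpm}.

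\textbf{Part (b): $\mu\leq 0$.} First dispose of the degenerate subcase $\s_\pm = 0$: since $\s_1\s_2>0$ forces $|\sigma_+|>|\sigma_-|$, we have $\s_+\neq 0$ always, so $\s_\pm=0$ can only occur for the $-$ sign, i.e.\ $\s_2=\s_1$; then $\mu = a_2-a_1\leq 0$ together with~\eqref{eq:Bad_Case} shows $\tau(-B)=\tau(\mp B)=\infty$ a.s., so $V^{(-)}=B$ trivially maximises $\p(\tau(V)>T)$ and the value function is identically $1$ (which is consistent with $+\Phi^{(-)}$). When $\s_\pm\neq 0$ and $\mu\leq 0$, both $z - 2\mu s > 0$ (as $z>0$, $\mu\leq 0$, $s>0$) and $\mu^2 N(\cdot)\geq 0$, so both terms in the bracket are non-negative; hence $\Phi^{(\pm)}_{xy}\geq 0$ on $E^\circ$ and Proposition~\ref{prop:Fin_Time} gives optimality of $V^{(\pm)}=\mp B$ with value function $\pm\Phi^{(\pm)}$.

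\textbf{Part (a): $\mu>0$, $\s_\pm\neq 0$.} Here I must exhibit a point $(z,s)$ with $z>0$, $s>0$ at which the bracket is strictly negative; Proposition~\ref{prop:Fin_Time} then yields the conclusion. The natural choice is to take $s$ large relative to $z$, so that the first term becomes negative (since $2z-4\mu s<0$ once $s>z/(2\mu)$) while the Gaussian factor $n\bigl((z-\mu s)/(|\s_\pm|\sqrt s)\bigr)$ stays bounded below on a compact range, and to show it dominates the second term. Concretely, fix $z>0$ and send $s\to\infty$: the argument $(z-\mu s)/(|\s_\pm|\sqrt s)\to -\infty$ like $-\sqrt s$, so $n(\cdot)$ decays like $\exp(-\mu^2 s/(2\s_\pm^2))$ up to polynomial factors, and the first term behaves like $-\,(\text{const}\cdot s^{-1/2})\exp(-\mu^2 s/(2\s_\pm^2))$ — i.e.\ it is eventually negative and of order $s^{-1/2}\mathrm{e}^{-\mu^2 s/(2\s_\pm^2)}$. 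For the second term, $N\bigl((-z-\mu s)/(|\s_\pm|\sqrt s)\bigr)\to 0$ with the same Gaussian tail: using the standard bound $N(-u)\leq n(u)/u$ for $u>0$ with $u = (z+\mu s)/(|\s_\pm|\sqrt s)\sim \mu\sqrt s/|\s_\pm|$, the second term is $O\bigl(s^{-1/2}\exp(-(z+\mu s)^2/(2\s_\pm^2 s))\bigr)$, and since $(z+\mu s)^2/(2\s_\pm^2 s) = \mu^2 s/(2\s_\pm^2) + \mu z/\s_\pm^2 + z^2/(2\s_\pm^2 s)$, the prefactor $\mathrm{e}^{2\mu z/\s_\pm^2}$ multiplies it to give order $s^{-1/2}\exp(\mu z/\s_\pm^2)\exp(-\mu^2 s/(2\s_\pm^2))$ — the \emph{same} exponential rate but with an extra decaying polynomial/subexponential slack. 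A careful comparison of the two subexponential prefactors shows the first (negative) term wins for all sufficiently large $s$, so the bracket is strictly negative there.

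\textbf{Main obstacle.} The delicate point is precisely this last comparison in Part (a): both competing terms carry the \emph{identical} leading exponential $\exp(-\mu^2 s/(2\s_\pm^2))$, so the sign is decided by lower-order factors, and one must track the polynomial-in-$\sqrt s$ coefficients and the constant $\mathrm{e}^{2\mu z/\s_\pm^2}$ honestly rather than by a crude exponential-rate argument. An alternative, perhaps cleaner, route avoiding asymptotics is to evaluate the bracket at a judiciously chosen finite pair — e.g.\ substitute $w := (z-\mu s)/(|\s_\pm|\sqrt s)$ and $v := (z+\mu s)/(|\s_\pm|\sqrt s)$, note $w<v$ and $v-w = 2\mu\sqrt s/|\s_\pm|$, $v+w = 2z/(|\s_\pm|\sqrt s)$, rewrite the bracket (after pulling out $n(w) = n(-v)\mathrm{e}^{2\mu z/\s_\pm^2}\,$... using $v^2-w^2 = 4\mu z/\s_\pm^2$) as $n(w)$ times $\bigl[-\,2(v-w)/(|\s_\pm|\sqrt s)^2\cdot(\dots) + (4\mu^2/\s_\pm^4)\,N(-v)/n(v)\bigr]$, and then invoke the sharp bound $N(-v)/n(v) < 1/v$ to force negativity once $v$ (equivalently $s$) is large enough while $z$ is, say, fixed at $1$. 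Either way, the qualitative mechanism is the same: in the regime $s\gg z$, the Gaussian-density term is negative and, thanks to the Mills-ratio bound, strictly dominates the normal-tail term, so $\Phi^{(\pm)}_{xy}$ cannot be globally non-negative and $\mp B$ is suboptimal.
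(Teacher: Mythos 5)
Your overall reduction (Lemma~\ref{der} combined with the equivalence of Proposition~\ref{prop:Fin_Time}, so that the theorem becomes a sign analysis of $\Phi^{(\pm)}_{xy}$) is exactly the paper's strategy, and your part (b) coincides with the paper's argument, including the separate treatment of the degenerate case $\s_\pm=0$ (only a harmless slip there: in the convention of~\eqref{eq:Def_Val_Fun} the value function of Problem~(T$-$) is $-\Phi^{(-)}\equiv -1$, not $+\Phi^{(-)}$).

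Part (a) is where there is a genuine gap, and it concerns the location of the sign violation rather than its existence. Proposition~\ref{prop:Fin_Time} requires a point of $E^\circ=D^\circ\times(0,T)$ at which $\Phi^{(\pm)}_{xy}<0$, so the time coordinate must be smaller than the given horizon $T$. Your construction fixes $z=\log(x/y)$ (e.g.\ $z=1$) and needs $s$ ``sufficiently large'', so for a small horizon (any $T\leq z/\mu$) it produces no admissible point, and the theorem's claim ``for any positive time horizon'' is not established. The paper does the opposite: it fixes an arbitrary $t\in(0,T)$ and pushes $(x,y)$ to the diagonal, where the bracket tends to a positive multiple of $\alpha N(-\alpha)-n(-\alpha)<0$ with $\alpha=\mu\sqrt{t}/|\s_\pm|$, so every horizon is covered. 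In addition, your primary asymptotic route is only asserted at the decisive step, and it cannot be closed by a rate comparison: with $u=(z+\mu s)/(|\s_\pm|\sqrt{s})$ the Mills bound shows the positive term is at most $(1+o(1))$ times the modulus of the negative term, with the \emph{same} exponential, the same $s^{-1/2}$ power and the same constant, so the sign is decided strictly beyond leading order. Your alternative route does settle it: setting $w=(z-\mu s)/(|\s_\pm|\sqrt{s})$, $v=(z+\mu s)/(|\s_\pm|\sqrt{s})$ and using $n(w)=\mathrm{e}^{2\mu z/\s_\pm^2}\,n(v)$, the bracket equals $\frac{n(w)}{\s_\pm^2 s}\bigl[3w-v+(v-w)^2 N(-v)/n(v)\bigr]$, and $N(-v)/n(v)<1/v$ gives the upper bound $\frac{n(w)}{\s_\pm^2 s}\,w(v+w)/v$, which is negative precisely when $w<0$, i.e.\ when $\mu s>z$. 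Carrying this out also hands you the repair of the horizon gap: for any given $T>0$ take $s=T/2$ and $0<\log(x/y)<\mu T/2$, i.e.\ a near-diagonal starting point, which is in essence the paper's choice.
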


\begin{remark}
\begin{enumerate}[(i)]
\item Note that under the assumptions of Theorem~\ref{neg}\eqref{item:a_cor}, 
the mirror and synchronous couplings are suboptimal in Problems~(T$+$)
and~(T$-$) respectively. Furthermore, 
if
$\pm=+$,
then 
$\sigma_\pm>0$
and hence the optimality of the mirror coupling can fail 
even if the laws of 
$X$
and
$Y(V)$
are equivalent 
(i.e.
$\sigma_1=\sigma_2$) 
for all
$V\in\V$. 
\item In the case 
$\mu>0$
and
$\sigma_\pm=0$
we have
$\pm=-$,
$\sigma_1=\sigma_2$
and 
$\Phi^{(-)}(x,y,t)=\I_{\{t\mu<\log(x/y)\}}$
for all
$(x,y)\in D^\circ$,
$t\in[0,T]$ (recall~\eqref{eq:Key_Quocient}),
which implies that 
the synchronous coupling is suboptimal if and only if
$T\geq%\frac{1}{\mu}
\log(x/y)/\mu.$
\end{enumerate}
\end{remark}

\begin{proof}
\noindent~\eqref{item:a_cor} 
By Proposition~\ref{prop:Fin_Time} %and Lemma~\ref{der},
it suffices to show that
for any fixed
$t>0$,
there exists
$(x,y)\in D^\circ$
(see~\eqref{eq:Def_D} for the definition of 
$D$)
such that 
%$$\mathrm{e}^3\Phi^{(\pm)}_{xy} (\mathrm{e}^2, \mathrm{e}, 1)
%= -\frac{7}{32} \,n \left(-\frac{3}{4} \right) + \frac{1}{4} \,
%\mathrm{e}^{\frac{1}{2}} N \left( -\frac{5}{4} \right),$$
%so
$\Phi^{(\pm)}_{xy}(x,y,t)< 0$.

Define 
$z:=\log(x/y)/(|\sigma_\pm|\sqrt{t})>0$
and
$\alpha:=\mu\sqrt{t}/|\sigma_\pm|>0$.
Note that, since we are allowed to choose  
the point 
$(x,y)\in D^\circ$
arbitrarily close to the diagonal half-line in the boundary of 
$D$,
a Taylor expansion of order one 
of
$z\mapsto n(z-\alpha)$
and 
$z\mapsto N(-z-\alpha)$
around 
$z=0$,
the representation  of 
$\Phi^{(\pm)}_{xy}$
in Lemma~\ref{der}
and 
the inequality 
\begin{equation}
\label{eq:final_inequality}
\alpha N(-\alpha)<n(-\alpha)
\end{equation}
imply that 
$\Phi^{(\pm)}_{xy}(x,y,t)< 0$
for some 
$(x,y)\in D^\circ$.
To check~\eqref{eq:final_inequality},
note that 
$un(u)=-n'(u)$
and
$$
\alpha N(-\alpha) = \int_\alpha^\infty \alpha n(u)\,\mathrm{d} u<
\int_\alpha^\infty un(u)\,\mathrm{d} u =n(-\alpha).
$$

\noindent~\eqref{item:b_cor}
Assume first 
$\sigma_\pm\neq0$.
Then the representation of
$\Phi^{(\pm)}_{xy}$
in Lemma~\ref{der}
and the assumption 
$\mu\leq 0$
imply 
$\Phi_{x,y}\geq0$
on
$E^\circ$.
Hence
Proposition~\ref{prop:Fin_Time}
yields the theorem. 
%implies the sufficient condition  in
%part~\eqref{item:b_cor}
%of
%the corollary.
If 
$\sigma_\pm=0$,
we have 
$\pm=-$,
$\sigma_1=\sigma_2$
and, by~\eqref{eq:Key_Quocient}, 
it follows
$\Phi^{(-)}(x,y,t)=1$
for all
$(x,y)\in D^\circ$,
$t\in[0,T]$.
Hence 
$-\Phi^{(-)}$
is the value function for Problem~(T$-$)
and the theorem follows. 
\end{proof}

\subsection{Efficiency of the mirror and synchronous couplings}
\label{subsec:Efficiency}
In this section we examine further the (lack of) optimality of the mirror and synchronous couplings
characterised by the assumptions of Theorem~\ref{neg}\eqref{item:a_cor}.  
Since in this case the two couplings do not
minimise and maximise (respectively) the coupling times 
of the geometric Brownian motions in~\eqref{gbmx} over finite
time horizons, but are nonetheless optimal both over the infinite time horizon
(Section~\ref{subsec:Inf_Time}) and for the stationary
criterion (Section~\ref{subsec:Stationary}),
it is natural to analyse whether the two couplings 
are efficient.  A coupling
$V\in\V$
is \textit{(exponentially) efficient}
(for some
$(x,y)\in D^\circ$)
in Problem~\eqref{Tpm}
if the rate of the exponential decay of the tail %of the distribution 
of its coupling time 
is the same as 
the exponential decay of the value function $F$, defined in~\eqref{eq:Def_Val_Fun},
in the following sense:
\begin{eqnarray}
\label{eq:Exp_Eff}
\pm \liminf_{t\to\infty} \frac{1}{t} \log\p_{x,y}\left(\tau(V)>t\right) & \leq & \pm \liminf_{t\to\infty} \frac{1}{t} \log \left(\pm F(x,y,t)\right).%\qquad
%\limsup_{t\to\infty} \frac{1}{t} \log\p_{x,y}\left(\tau(V)>t\right) =  \limsup_{t\to\infty} \frac{1}{t} \log \left(\pm F(x,y,t)\right).
\end{eqnarray}
Note that by~\eqref{eq:Def_Val_Fun} the opposite 
inequality in~\eqref{eq:Exp_Eff} holds for any coupling
$V$.
Hence we could have defined exponential efficiency by requiring equality in~\eqref{eq:Exp_Eff}.
Furthermore, if the limits on both sides of~\eqref{eq:Exp_Eff} exist,
the definition of the exponential efficiency 
in Problem~\eqref{Tpm}
can be further simplified to
$$
\lim_{t\to\infty} \frac{1}{t} \log\p_{x,y}\left(\tau(V)>t\right) = \lim_{t\to\infty} \frac{1}{t} \log \left(\pm F(x,y,t)\right).%\qquad
$$

It is clear that if a coupling solves Problem~\eqref{Tpm}
for all time horizons
$T>0$
and does not depend on
$T$,
then it is also efficient according to the definition in~\eqref{eq:Exp_Eff}. 
Hence, by Theorem~\ref{neg}\eqref{item:b_cor},
the mirror and synchronous couplings are efficient 
if 
$\mu\leq0$.
However, the following statement holds. 

\begin{thm}
\label{prop:Efficiency}
If
$\mu>0$,
%and
%$\sigma_\pm\neq0$,
the mirror and synchronous couplings are 
NOT efficient 
(for any $(x,y)\in D^\circ$)
in Problems~$(\mathrm{T}+)$ and~$(\mathrm{T}-)$  respectively. 
\end{thm}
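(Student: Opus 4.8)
The plan is to compare the exponential decay rate of $\p_{x,y}(\tau(\mp B)>t)$ — which by Lemma~\ref{der} and~\eqref{eq:Phi_Rep}--\eqref{eq:h_rep} is known explicitly — with the decay rate of the value function $\pm F(x,y,t)$, and to show the former is strictly faster (i.e. a more negative exponent) when $\mu>0$. First I would compute the left-hand side of~\eqref{eq:Exp_Eff} for $V=\mp B$. Since $\tau(\mp B)$ is the first passage time of the Brownian motion with drift $(\s_\pm B_s+\mu s)_{s\ge0}$ over the level $\log(x/y)>0$, and $\mu>0$ means the drift pushes toward the barrier, one has $\p_{x,y}(\tau(\mp B)=\infty)=0$ (when $\s_\pm\ne0$) but the tail decays at a definite exponential rate. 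From~\eqref{eq:h_rep}, $h^{(\pm)}(z,t)=N\!\big((z-\mu t)/(|\s_\pm|\sqrt t)\big)-e^{2\mu z/\s_\pm^2}N\!\big((-z-\mu t)/(|\s_\pm|\sqrt t)\big)$; as $t\to\infty$ both normal terms behave like Gaussian tails, and a standard Laplace/Mills-ratio asymptotic gives
$$
\lim_{t\to\infty}\frac1t\log\p_{x,y}(\tau(\mp B)>t)=-\frac{\mu^2}{2\s_\pm^2}
$$
when $\s_\pm\ne0$, and $=0$ in the degenerate case $\s_\pm=0$ (which only occurs for $\pm=-$, where $\Phi^{(-)}\equiv1$ eventually and the synchronous coupling is trivially inefficient unless one interprets the statement accordingly). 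So for the mirror case ($+$) the rate is $-\mu^2/(2\s_+^2)$ and for the synchronous case ($-$) it is $-\mu^2/(2\s_-^2)$ (with $\s_-\ne0$; recall $|\s_+|>|\s_-|$).

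Next I would bound the other side of~\eqref{eq:Exp_Eff}, i.e.\ exhibit a competing coupling whose coupling time has a strictly slower exponential decay, which by definition of $F$ forces $\pm F(x,y,t)$ to decay no faster than that competitor. The natural candidate is suggested by the conjecture in the introduction: in the minimisation problem (T$+$) use the \emph{synchronous} coupling $V=+B$, and in the maximisation problem (T$-$) use the \emph{mirror} coupling $V=-B$. With $V=\pm B$ (the ``wrong'' sign), $\tau(\pm B)$ is the first passage time of $(\s_\mp B_s+\mu s)_{s\ge0}$ over $\log(x/y)$, where $\s_\mp=\s_1\mp\s_1$ takes the value $2\s_1$ or $0$ — the point being that $|\s_\mp|$ in this passage-time description is replaced by the \emph{other} of $|\s_+|,|\s_-|$. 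Actually the cleaner route is to observe directly that $\tau(\pm B)$, for the minimisation problem, corresponds to $\sigma_{\mp}$ playing the role that $\sigma_\pm$ played above: one gets $\lim_t \frac1t\log\p_{x,y}(\tau(\pm B)>t)=-\mu^2/(2\sigma_\mp^2)$, and since $|\sigma_+|>|\sigma_-|$ this rate is strictly less negative than $-\mu^2/(2\sigma_\pm^2)$ precisely when we have taken $\pm=+$ with $\sigma_\mp=\sigma_-$, or $\pm=-$ with... — here one must be careful about which inequality we need and check the two cases ($+$ and $-$) separately against $|\sigma_+|>|\sigma_-|$. In the minimisation case we need a competitor with \emph{slower} decay, which means larger $\sigma^2$ in the denominator is bad; in the maximisation case the roles of "which side of~\eqref{eq:Exp_Eff} to beat" flip because of the $\pm$ in front, so a competitor with \emph{faster} decay is what disproves efficiency of the synchronous coupling — and faster decay means a \emph{smaller} $\sigma^2$, which is $\sigma_-^2<\sigma_+^2$, achieved by the mirror coupling. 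Either way, $|\sigma_+|\ne|\sigma_-|$ (guaranteed by~\eqref{parameters}) makes the two rates strictly different, and assembling the two computed rates with the definition of $F$ yields that~\eqref{eq:Exp_Eff} fails strictly.

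To make the middle step rigorous without only exhibiting a single competitor, the cleanest argument is: the value function satisfies $\pm F(x,y,t)\ge \pm\p_{x,y}(\tau(V')>t)$ for the competing $V'$ in the minimisation problem (where $\pm=+$, so $F\le \p(\tau(V')>t)$ directly), hence $\liminf_t\frac1t\log F(x,y,t)\ge -\mu^2/(2\sigma_-^2)>-\mu^2/(2\sigma_+^2)=\lim_t\frac1t\log\p_{x,y}(\tau(-B)>t)$, contradicting~\eqref{eq:Exp_Eff}; symmetrically for the maximisation problem, using $-F(x,y,t)\ge -\p_{x,y}(\tau(-B)>t)$ i.e.\ $F\le\p(\tau(-B)>t)$... — again the signs need care but the structural content is the strict ordering of the two Gaussian rates coming from $|\sigma_+|\neq|\sigma_-|$. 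The main obstacle I expect is purely bookkeeping: correctly tracking the two $\pm$ cases through the $\liminf$ inequality~\eqref{eq:Exp_Eff} (whose direction is reversed by the leading sign) and confirming in each case that the relevant competitor coupling genuinely produces the worse/better rate — together with the clean asymptotic $\lim_{t\to\infty}\frac1t\log\p_{x,y}(\tau(\mp B)>t)=-\mu^2/(2\sigma_\pm^2)$, which itself needs a short but careful Mills-ratio estimate applied to the two-term expression~\eqref{eq:h_rep} to show the exponential term $e^{2\mu z/\s_\pm^2}$ (constant in $t$) does not change the rate. The degenerate subcase $\sigma_\pm=0$, noted in the remark after Theorem~\ref{neg}, should be handled separately since there $\Phi^{(-)}$ is eventually identically $1$.
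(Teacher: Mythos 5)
Your overall strategy is exactly the paper's: compute the exponential tail rate $\lim_{t\to\infty}\tfrac1t\log\p_{x,y}(\tau(\mp B)>t)=-\mu^2/(2\s_\pm^2)$ from the explicit first-passage formula, and then play the opposite-sign coupling against the value function, using $|\s_+|>|\s_-|$ to separate the two rates strictly. But as written the decisive step is wrong: in Problem~(T$+$) the synchronous competitor gives $F\le\p_{x,y}(\tau(B)>t)$, hence
$\liminf_{t}\tfrac1t\log F(x,y,t)\le-\mu^2/(2\s_-^2)<-\mu^2/(2\s_+^2)=\lim_{t}\tfrac1t\log\p_{x,y}(\tau(-B)>t)$,
which is what violates~\eqref{eq:Exp_Eff}. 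Your chain ``$\liminf_{t}\tfrac1t\log F\ge-\mu^2/(2\s_-^2)>-\mu^2/(2\s_+^2)$'' has both inequalities reversed (since $\s_-^2<\s_+^2$ one has $-\mu^2/(2\s_-^2)<-\mu^2/(2\s_+^2)$), and taken literally it is \emph{consistent} with efficiency rather than contradicting it; the middle paragraph likewise asserts that the mirror coupling has the smaller variance $\s_-^2$, which it does not. In~(T$-$) the correct argument is that the mirror competitor forces $\liminf_t\tfrac1t\log(-F)\ge-\mu^2/(2\s_+^2)$, strictly above the synchronous rate $-\mu^2/(2\s_-^2)$. You flag the sign bookkeeping as routine, but it is the entire content of the conclusion, so deferring it leaves the proof incomplete.

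Two further points. The limit $\lim_t\tfrac1t\log\Phi^{(\pm)}(x,y,t)=-\mu^2/(2\s_\pm^2)$ is not merely a matter of checking that the factor $\mathrm{e}^{2\mu z/\s_\pm^2}$, being constant in $t$, does not alter the rate: by~\eqref{eq:idnetityNormal} the two terms in~\eqref{eq:h_rep} decay at the \emph{same} exponential rate, so the real danger is cancellation making the difference decay strictly faster. Ruling this out is where the paper spends most of its proof: the Mills-ratio bounds~\eqref{eq:NOrmal_Bounds} yield a positive lower bound of the form $N(Z(t))\bigl(1-(1+Z(t)^2)/(\widehat Z(t)Z(t))\bigr)$ only under the extra condition $\log(x/y)>\s_+^2/(2\mu)$, and general starting points are then handled by a Markov-property comparison of first-passage times (alternatively one could integrate the inverse-Gaussian density, but some such argument must be supplied). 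Finally, your degenerate case is mis-stated: for $\mu>0$ and $\s_-=0$ one has $\tau(B)=\log(x/y)/\mu$ deterministic and finite, so $\p_{x,y}(\tau(B)>t)=0$ eventually and the rate is $-\infty$, not $0$ (and $\Phi^{(-)}$ is eventually $0$, not $1$); inefficiency of the synchronous coupling in~(T$-$) then follows because the mirror competitor gives $\liminf_t\tfrac1t\log(-F)\ge-\mu^2/(2\s_+^2)>-\infty$.
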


\begin{remark}
We thank the referee for noting that 
Theorem~\ref{prop:Efficiency} holds
without assuming
$\sigma_\pm\neq0$.
\end{remark}

\begin{proof}
The following bounds hold for the standard normal distribution function
$N(\cdot)$,
\begin{eqnarray}
\label{eq:NOrmal_Bounds}
-\frac{z}{1+z^2}\,n(z) \leq 
N(z)\leq 
-z^{-1} n(z) \qquad\text{for any $z<0$, where $n=N'$.}
\end{eqnarray}
The first inequality follows from the identity 
$\int_r^\infty (1+y^{-2})\mathrm{e}^{-y^2/2}\,\mathrm{d}y=r^{-1}\mathrm{e}^{-r^2/2}$
for all
$r>0$,
and the second is given in~\eqref{eq:final_inequality}.

Assume now that 
$\sigma_\pm\neq0$.
%The inequalities above and the representation of
%$\Phi^{(\pm)}(x,y,t)$
%in~\eqref{eq:Phi_Rep}
%imply that, for any
%$(x,y)\in D^\circ$,
%there exist functions 
%$g_i:(0,\infty)\to(0,\infty)$,
%$i=1,2$,
%such that, 
%for any
%$t>0$,
%we have 
%\begin{eqnarray}
%g_1(t) \exp \left(-\frac{(\log(x/y) - \mu t)^2}{2\s_{\pm}^2 t} \right) 
%\leq 
%\Phi^{(\pm)}(x,y,t)
%\leq 
%g_2(t) \exp \left(-\frac{(\log(x/y) - \mu t)^2}{2\s_{\pm}^2 t} \right),
%\end{eqnarray}
%and $\lim_{t\to\infty}\frac{1}{t}\log g_i(t)=0$ for $i=1,2$
%(the functions 
%$g_i$,
%$i=1,2$,
%also depend on a fixed starting point
%$(x,y)$).
Let
$$Z(t):=\frac{\log(x/y) - \mu t}{|\s_\pm| \sqrt{t}} \qquad\text{and}\qquad
\widehat Z(t):=\frac{-\log(x/y) - \mu t}{|\s_\pm| \sqrt{t}},
$$ 
and note that for all large $t>0$
we have
$\widehat Z(t)<Z(t)<0$
%and
%Hence $1>Z(t)/\widehat Z(t)>0$
and the equality 
\begin{eqnarray}
\label{eq:idnetityNormal}
n\left(Z(t) \right) = 
\left( \frac{x}{y} \right)^{2\mu/\sigma_\pm^2}
n \left(\widehat Z(t)\right)
\end{eqnarray}
holds.
The representations in~\eqref{eq:Phi_Rep} and~\eqref{eq:h_rep}
imply
\begin{eqnarray}
\label{eq:main_Estimate}
\Phi^{(\pm)}(x,y,t) & = & 
N(Z(t)) \left[1- \left( \frac{x}{y} \right)^{2\mu/\sigma_\pm^2}\frac{N(\widehat Z(t))}{N(Z(t))}\right].
\end{eqnarray}
The inequalities in~\eqref{eq:NOrmal_Bounds}
imply the limit
\begin{equation}
\label{lim:NZ}
\lim_{t\to\infty}\frac{1}{t}\log N(Z(t))= -\frac{\mu^2}{2\s_\pm^2}.
\end{equation}
In order to deal with the second factor on the right-hand side of~\eqref{eq:main_Estimate},
we note the following inequalities,
$$
1- \left( \frac{x}{y} \right)^{2\mu/\sigma_\pm^2}\frac{N(\widehat Z(t))}{N(Z(t))}\geq
1+ (1+Z^2(t)) \frac{N(\widehat Z(t))}{n(\widehat Z(t))Z(t)} 
\geq 1- \frac{1+Z^2(t)}{\widehat Z(t)Z(t)},
$$
which are a consequence of two applications of the second inequality in~\eqref{eq:NOrmal_Bounds}
and identity~\eqref{eq:idnetityNormal}.
Let the assumption 
\begin{equation}
\label{eq:Final_as}
\log(x/y)>\frac{\sigma_+^2}{2\mu}
\end{equation}
hold.
Then the following inequality is satisfied 
$$
1- \frac{1+Z^2(t)}{\widehat Z(t)Z(t)} = \frac{t^{-1}(2\mu\log(x/y)-\sigma_\pm^2)-t^{-2}2\log^2(x/y)}{\mu^2 -t^{-2}\log^2(x/y)}>0\qquad \text{for all large $t>0$,}
$$
and the limit holds:
\begin{equation}
\label{eq:limZ}
\lim_{t\to\infty}\frac{1}{t}\log \left(1- \frac{1+Z^2(t)}{\widehat Z(t)Z(t)}\right)=0.
\end{equation}
By~\eqref{eq:main_Estimate}
we have
$$
N(Z(t)) \left[1- \frac{1+Z^2(t)}{\widehat Z(t)Z(t)}\right]\leq
\Phi^{(\pm)}(x,y,t) \leq 
N(Z(t)). 
$$
If the starting points 
$x,y$
satisfy~\eqref{eq:Final_as},
then~\eqref{lim:NZ},~\eqref{eq:limZ},
the inequalities in the line above
and the fact that $\log$ is increasing
imply the limit:
$\lim_{t \to \infty} \frac{1}{t} \log\Phi^{(\pm)}(x,y,t) = -\frac{\mu^2}{2\s_\pm^2}$.

In order to see that this limit holds without assumption~\eqref{eq:Final_as},
i.e. for $(x,y)\in D$
such that 
$\log(x/y)\in(0,\sigma_\pm^2/(2\mu)]$,
define a Brownian motion with drift
$W$,
started from
$0$,
and its first-passage time $T(z)$:
$$
W_t:=\mp\sigma_\pm B_t+\mu t,\quad t\geq0,\qquad\text{and}\qquad
T(z):=\inf\{t\geq0:W_t=z\},\quad z\in\R,
$$
and note that 
$\p_{x,y}(\tau(\mp B)>t)=\p(T(\log(x/y))>t)$
holds for any
$(x,y)\in D$
(cf.~\eqref{eq:Key_Quocient}).
Fix
$(x,y)\in D$
that violates assumption~\eqref{eq:Final_as} 
and pick
$\alpha_0<0$
and 
$(x_0,y_0)\in D^\circ$
such that the following holds:
$$
\log(x_0/y_0)=\log(x/y)-\alpha_0>\frac{\sigma_+^2}{2\mu}.
$$
Denote the constant
$q:=\p(W_1<\alpha_0,T(\log(x/y))>1)$,
which clearly satisfies
$q\in(0,1)$.
The Markov property of $W$
at time
$1$
yields the following inequalities for all $t>1$:
\begin{eqnarray*}
\p_{x,y}(\tau(\mp B)>t) & = & \p(T(\log(x/y))>t)\\
& \geq & q % \p(W_1\in[\alpha_0,\beta_0],T(\log(x/y))>1)
\p(T(\log(x/y)-\alpha_0)>t-1)\\
& > & q \p(T(\log(x/y)-\alpha_0)>t)=q \p_{x_0,y_0}(\tau(\mp B)>t).
\end{eqnarray*}
Since~\eqref{eq:main_Estimate}
implies the bound
$\p_{x,y}(\tau(\mp B)>t)\leq N(Z(t))$
for any  
$(x,y)\in D^\circ$,
the following limits hold 
\begin{equation}
\label{eq:Calc_pm}
\lim_{t \to \infty} \frac{1}{t} \log\p_{x,y}(\tau(\mp B) > t) = 
\lim_{t \to \infty} \frac{1}{t} \log\Phi^{(\pm)}(x,y,t) = -\frac{\mu^2}{2\s_\pm^2},
\end{equation}
by the inequality above, our analysis under assumption~\eqref{eq:Final_as}
and the limit in~\eqref{lim:NZ}.
%where $\pm$ and $\mp$ denote either $+$ and $-$, or $-$ and $+$,

%The inequalities above involving the normal CDF and PDF,
%together with the representations in~\eqref{eq:Phi_Rep} and~\eqref{eq:h_rep},
%therefore yield the following estimate for all large $t>0$:
%$$
%-Z^{-1}(t)\left(1+\frac{1}{1+Z^{-2}(t)}\right)\>n(Z(t))\leq
%\Phi^{(\pm)}(x,y,t)\leq -Z^{-1}(t)n(Z(t)),
%$$
%which clearly implies the existence of functions
%$g_i$,
%$i=1,2$, in~\eqref{eq:main_Estimate}.
%j.Hence~\eqref{eq:main_Estimate} and~\eqref{lim:NZ}
%imply 
%Since the logarithm
%is an increasing function,~\eqref{eq:main_Estimate} implies 

Definition~\eqref{def:mu_sigma_pm}
and 
assumption
$\sigma_\pm\neq0$
imply
$|\sigma_+|>|\sigma_-|>0$
and hence
$\mu/(2\sigma_+^2)<\mu/(2\sigma_-^2)$.
The mirror coupling is therefore not efficient for Problem~(T$+$)
since it has a strictly thicker exponential tail than 
the synchronous 
coupling. Likewise, 
the synchronous coupling is not efficient for Problem~(T$-$),
which requires the thickest possible exponential tail among all couplings,
since it has a thinner tail than the mirror coupling. 

In the case 
$\sigma_\pm=0$
we have
$\sigma_1=\sigma_2$
and,
by~\eqref{eq:Key_Quocient},
%the coupling time
%$\tau(B)$
%is constant:
$\tau(B)=\log(x/y)/\mu$.
Hence 
$\p_{x,y}\left(\tau(B)>t\right)=0$ 
for all 
$t\geq\log(x/y)/\mu$.
Since the limit in~\eqref{eq:Calc_pm}
still holds
for $\pm=+$ 
(note that 
$|\sigma_+|>0$),
we obtain the inequality:
$$
\lim_{t\to\infty} \frac{1}{t} \log\p_{x,y}\left(\tau(B)>t\right)=-\infty <  
-\frac{\mu^2}{2\s_+^2} = \lim_{t\to\infty} \frac{1}{t} \log\p_{x,y}\left(\tau(-B)>t\right). 
$$
This inequality and definition~\eqref{eq:Exp_Eff}
imply that the mirror (resp. synchronous) coupling is not efficient for 
Problem~(T$+$) (resp.~(T$-$)). 
\end{proof}

\begin{remark}
It is the presence of the positive drift
$\mu>0$
that makes the mirror coupling suboptimal 
in Problem~(T$+$) (see Theorem~\ref{neg}).
The proof of Theorem~\ref{prop:Efficiency} 
suggests that if the drift
$\mu$ 
is strictly positive and the time horizon $T$ is large, 
it is in fact better (according to the exponential tail criterion) to use synchronous coupling. 
This naturally leads to the following conjecture: 
%, which is left for future research.
\begin{flushleft}
\textit{If $\mu>0$, %and $\sigma_\pm\neq0$,
the synchronous (resp. mirror) coupling is efficient 
in Problem~$(\mathrm{T}+)$ (resp.~$(\mathrm{T}-)$).}
\end{flushleft}
\end{remark}

\appendix
\section{Family of Brownian motions on a filtered probability space}
\label{app}
%The following well-known lemma enables us to
%study the supermartingale property of the
%process
%$\pm U(V,\Psi)$,
%defined in~\eqref{eq:Proc_U},
%using stochastic control theory .
Recall that $\V$
is defined in~\eqref{eq:DEf_cV}.
See e.g.~\cite[Lemma~2.1]{article2} for the proof of Lemma~\ref{bm}.

\begin{lemma}
\label{bm} For any Brownian motion $V \in \mathcal{V}$, there exists
an $(\mathcal{F}_t)$-Brownian motion $W \in \mathcal{V}$ and
a process $C = (C_t)_{t \geq 0}$,
such that $B$ and $W$ are independent, $C$ is progressively measurable
with $-1 \leq C_t \leq 1$ for all $t \geq 0$ $\p$-a.s., and the following
representation holds:
$$V_t = \int_0^t C_s \,\mathrm{d}B_s + \int_0^t \sqrt{1 - C_s^2} \,\mathrm{d}W_s.$$
%Furthermore, the inequality $|[B,V]_t| \leq t$ holds for all $t \geq 0$ $\p$-a.s.
\end{lemma}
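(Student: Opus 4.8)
The plan is to build $C$ out of the cross-variation of $B$ and $V$, to peel off the $B$-component of $V$, and then to represent the $B$-orthogonal remainder as a stochastic integral against a fresh Brownian motion $W$, using an auxiliary independent Brownian motion to cope with the directions in which that remainder degenerates. First I would record that $B$ and $V$ are continuous $(\F_t)$-martingales with $[B]_t=[V]_t=t$. The Kunita--Watanabe inequality gives $|\mathrm{d}[B,V]_t|\leq\sqrt{\mathrm{d}[B]_t\,\mathrm{d}[V]_t}=\mathrm{d}t$, so the signed covariation measure is absolutely continuous with respect to Lebesgue measure: there is a progressively measurable density $C=(C_t)_{t\geq0}$ with $[B,V]_t=\int_0^t C_s\,\mathrm{d}s$ and $|C_t|\leq1$ for a.e.\ $t$, $\p$-a.s.; after modifying on a null set we may take $-1\leq C_t\leq1$ for all $t$. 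This yields the process $C$ claimed in the lemma.

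Next I would set $M_t:=V_t-\int_0^t C_s\,\mathrm{d}B_s$, a continuous local martingale. A direct computation of its brackets, using $\mathrm{d}[B]_s=\mathrm{d}s$ and $\mathrm{d}[B,V]_s=C_s\,\mathrm{d}s$, yields $[M]_t=\int_0^t(1-C_s^2)\,\mathrm{d}s$ and $[M,B]_t\equiv0$, so $M$ is exactly the part of $V$ that is $B$-orthogonal. Writing $\phi_s:=\sqrt{1-C_s^2}$, the target identity becomes $M_t=\int_0^t\phi_s\,\mathrm{d}W_s$ for some Brownian motion $W$ independent of $B$.

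The main obstacle is that $\phi_s$ may vanish on a set of positive measure (for instance when $V=B$, where $C\equiv1$), so one cannot simply invert $\phi$ to define $\mathrm{d}W=\phi^{-1}\,\mathrm{d}M$. To get around this I would use the richness of the space (enlarging it in the standard way if necessary, which leaves $B$, $V$ and $M$ intact) to take an $(\F_t)$-Brownian motion $\tilde B$ independent of $(B,V)$, and set
$$W_t:=\int_0^t\phi_s^{-1}\,\I_{\{\phi_s>0\}}\,\mathrm{d}M_s+\int_0^t\I_{\{\phi_s=0\}}\,\mathrm{d}\tilde B_s.$$
On the zero set the independent increments of $\tilde B$ stand in for $M$; this is the only place the enlargement is needed. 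Using $\mathrm{d}[M]_s=\phi_s^2\,\mathrm{d}s$, $[\tilde B,M]=[\tilde B,B]=0$ and $[\tilde B]_s=\mathrm{d}s$, one checks $[W]_t=\int_0^t\I_{\{\phi_s>0\}}\,\mathrm{d}s+\int_0^t\I_{\{\phi_s=0\}}\,\mathrm{d}s=t$, so L\'evy's characterisation makes $W$ a standard $(\F_t)$-Brownian motion, i.e.\ $W\in\V$.

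Finally I would verify the representation and the independence. Computing $\int_0^t\phi_s\,\mathrm{d}W_s=\int_0^t\I_{\{\phi_s>0\}}\,\mathrm{d}M_s$, and noting that $\int_0^t\I_{\{\phi_s=0\}}\,\mathrm{d}M_s$ is a local martingale of zero quadratic variation (since $\int_0^t\I_{\{\phi_s=0\}}\phi_s^2\,\mathrm{d}s=0$) and hence vanishes $\p$-a.s., gives $\int_0^t\phi_s\,\mathrm{d}W_s=M_t$; substituting back recovers $V_t=\int_0^t C_s\,\mathrm{d}B_s+\int_0^t\phi_s\,\mathrm{d}W_s$. For independence, the same bracket computations give $[W,B]_t=\int_0^t\phi_s^{-1}\I_{\{\phi_s>0\}}\,\mathrm{d}[M,B]_s+\int_0^t\I_{\{\phi_s=0\}}\,\mathrm{d}[\tilde B,B]_s=0$, so $(B,W)$ is a continuous local martingale whose covariation matrix equals $t$ times the $2\times2$ identity matrix; the two-dimensional L\'evy characterisation then shows $(B,W)$ is a planar Brownian motion, whence $B$ and $W$ are independent. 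I expect the degeneracy of $\phi$ on its zero set to be the only genuinely delicate point, everything else being bookkeeping with It\^o brackets and two applications of L\'evy's theorem.
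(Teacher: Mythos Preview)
Your proof is correct and follows the standard construction: extract $C$ as the Radon--Nikodym density of $\mathrm{d}[B,V]$ via Kunita--Watanabe, peel off the $B$-component to get the orthogonal remainder $M$, and manufacture $W$ by inverting $\phi=\sqrt{1-C^2}$ on $\{\phi>0\}$ while patching with an auxiliary independent Brownian motion on $\{\phi=0\}$. The paper does not give its own proof but simply cites~\cite[Lemma~2.1]{article2}; the remark following the lemma notes precisely the need for an independent Brownian motion $B^\perp\in\V$, which is exactly your $\tilde B$, so your argument is in line with what the paper has in mind.
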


\begin{remark}
The proof of this lemma requires the existence of a Brownian motion
$B^{\perp} \in \V$
that is independent of $B$. If our probability space did not support such a
Brownian motion, we could enlarge it, which would only increase the set $\V$.
Since the optimal Brownian motions in Theorems~\ref{thm} and~\ref{neg}\eqref{item:b_cor}
are constructed from $B$
alone, they would also have to be optimal in the original problem. Therefore we
can assume that $B^{\perp}$ exists.
\end{remark}


\begin{thebibliography}{10}


\bibitem{article3} R.\ Atar and K.\ Burdzy, Mirror Couplings and Neumann
Eigenfunctions.  \emph{Journal of the American Mathematical Society}
\textbf{17}(2), 243--265, 2004.

\bibitem{article} M.\ T.\ Barlow and S.\ D.\ Jacka, Tracking a Diffusion, and an
Application to Weak Convergence.  \emph{Advances in Applied Probability}
\textbf{18}, 15--25, 1986.

\bibitem{article4} K.\ Burdzy and W.\ S.\ Kendall, Efficient Markovian
Couplings: Examples and Counterexamples.  \emph{The Annals of Probability}
\textbf{10}(2), 362--409, 2000.

\bibitem{Borodin} A.\ N.\ Borodin, P.\ Salminen, \emph{Handbook of Brownian
Motion -- Facts and Formulae}.  Birkhauser, Basel, second edition, 2002.

\bibitem{article5} E.\ P.\ Hsu and K.\-T.\ Sturm, Maximal Coupling of Euclidean
Brownian Motions.  SFB Preprint 85, University of Bonn, 2003.

\bibitem{article2} S.\ D.\ Jacka and A.\ Mijatovi\'c, Coupling and Tracking of
Regime-Switching Martingales.  ArXiv:1209.0180 [math.PR], 2012.

%\bibitem{Karatzas} I.\ Karatzas and S.\ E.\ Shreve, \emph{Brownian Motion and
%Stochastic Calculus}.  Springer-Verlag, New York, second edition, 1991.

\bibitem{Lindvall} T.\ Lindvall, \emph{Lectures on the Coupling Method}.  Dover
Publications, New York, 2002.

\bibitem{article6} T.\ Lindvall and L.\ C.\ G.\ Rogers, Coupling of
Multidimensional Diffusions by Reflection.  \emph{The Annals of Probability}
\textbf{14}(3), 860--872, 1986.

\bibitem{article7} M.\ N.\ Pascu, Mirror Coupling of Reflecting Brownian motion
and an Application to Chavel's Conjecture.  \emph{Electronic Journal of
Probability} \textbf{16}, 505--530, 2011.

\bibitem{Thorisson} H.\ Thorisson, \emph{Coupling, Stationarity, and Regeneration}. Springer-Verlag,
New York, 2000.


\end{thebibliography}
\end{document}